\newtheorem{theorem}{Theorem}[section]
\newtheorem{lemma}[theorem]{Lemma}
\newtheorem{corollary}[theorem]{Corollary}
\theoremstyle{definition}
\newtheorem{definition}[theorem]{Definition}
\newtheorem*{acknowledgement}{Acknowledgement}
\theoremstyle{remark}
\newtheorem{remark}[theorem]{Remark}
\DeclareMathOperator{\diam}{diam}
\begin{document}
\title[Nonexpansive-type mappings]{On the Suzuki nonexpansive-type mappings}
\author[A. Betiuk-Pilarska]{Anna Betiuk-Pilarska}
\author[A. Wi\'{s}nicki]{Andrzej Wi\'{s}nicki}

\begin{abstract}
It is shown that if $C$ is a nonempty convex and weakly compact subset of a
Banach space $X$ with $M(X)>1$ and $T:C\rightarrow C$ satisfies condition $%
(C)$ or is continuous and satisfies condition $(C_{\lambda })$ for some $%
\lambda \in (0,1),$ then $T$ has a fixed point. In particular, our theorem
holds for uniformly nonsquare Banach spaces. A similar statement is proved
for nearly uniformly noncreasy spaces.
\end{abstract}

\subjclass[2010]{Primary 47H10; Secondary 46B20, 47H09}
\keywords{Nonexpansive mapping, Fixed point, Uniformly nonsquare Banach
space, Uniformly noncreasy space}
\address{Anna Betiuk-Pilarska, Institute of Mathematics, Maria Curie-Sk\l %
odowska University, 20-031 Lublin, Poland}
\email{abetiuk@hektor.umcs.lublin.pl}
\address{Andrzej Wi\'{s}nicki, Institute of Mathematics, Maria Curie-Sk\l %
odowska University, 20-031 Lublin, Poland}
\email{awisnic@hektor.umcs.lublin.pl}
% \date{September 18, 2012}
\maketitle

\section{Introduction}

Let $C$ be a nonempty subset of a Banach space $X$. A mapping $%
T:C\rightarrow X$ is said to be nonexpansive if
\begin{equation*}
\left\Vert Tx-Ty\right\Vert \leq \left\Vert x-y\right\Vert
\end{equation*}%
for $x,y\in C$. There is a large literature concerning fixed point theory of
nonexpansive mappings and their generalizations (see \cite{KiSi} and
references therein). Recently, Suzuki \cite{Su1} defined a class of
generalized nonexpansive mappings as follows.

\begin{definition}
A mapping $T:C\rightarrow X$ is said to satisfy condition $(C)$ if for all $%
x,y\in C,$%
\begin{equation*}
\frac{1}{2}\left\Vert x-Tx\right\Vert \leq \left\Vert x-y\right\Vert \text{\
\ implies \ }\left\Vert Tx-Ty\right\Vert \leq \left\Vert x-y\right\Vert .
\end{equation*}
\end{definition}

Subsequently the definition was widened in \cite{GLS}.

\begin{definition}
Let $\lambda \in (0,1).$ A mapping $T:C\rightarrow X$ is said to satisfy
condition $(C_{\lambda })$ if for all $x,y\in C,$%
\begin{equation*}
\lambda \left\Vert x-Tx\right\Vert \leq \left\Vert x-y\right\Vert \text{\ \
implies \ }\left\Vert Tx-Ty\right\Vert \leq \left\Vert x-y\right\Vert .
\end{equation*}
\end{definition}

It is not difficult to see that if $\lambda _{1}<\lambda _{2}$ then
condition $(C_{\lambda _{1}})$ implies condition $(C_{\lambda _{2}}).$
Several examples of mappings satisfying condition $(C_{\lambda })$ are given
in \cite{GLS, Su1}.

Two other related generalizations of a nonexpansive mapping have been
proposed in \cite{BDT} and \cite{LlMo}. Recall that a sequence $(x_{n})$ is
called an approximate fixed point sequence for $T$ (afps, for short) if $%
\lim_{n\rightarrow \infty }\left\Vert Tx_{n}-x_{n}\right\Vert =0.$

\begin{definition}[{see {\protect\cite[Def. 3.1]{BDT}}}]
A mapping $T:C\rightarrow X$ is said to satisfy condition $(\ast )$ if

\begin{enumerate}
\item[(i)] for each nonempty closed convex and $T$-invariant subset $D$ of $%
C $, $T$ has an afps in $D,$ and

\item[(ii)] For each pair of closed convex $T$-invariant subsets $D$ and $E$
of $C$, the asymptotic center $A(E,(x_{n}))$ of a sequence $(x_{n})$
relative to $E$ is $T$-invariant for each afps $(x_{n})$ in $D.$
\end{enumerate}
\end{definition}

\begin{definition}[{see {\protect\cite[Def. 3.1]{LlMo}}}]
A mapping $T:C\rightarrow X$ is said to satisfy condition $(L)$ if

\begin{enumerate}
\item[(i)] for each nonempty closed convex and $T$-invariant subset $D$ of $%
C $, $T$ has an afps in $D,$ and

\item[(ii)] For any afps $(x_{n})$ of $T$ in $C$ and for each $x\in C,$%
\begin{equation*}
\limsup_{n\rightarrow \infty }\left\Vert x_{n}-Tx\right\Vert \leq
\limsup_{n\rightarrow \infty }\left\Vert x_{n}-x\right\Vert .
\end{equation*}
\end{enumerate}
\end{definition}

It is easily seen that condition $(L)$ implies condition $(\ast ).$ One can
also prove that condition $(C)$ implies condition $(\ast )$ (see \cite[Lemma
6]{Su1}) and if $T:C\rightarrow C$ is continuous and satisfies condition $%
(C_{\lambda })$ for some $\lambda \in (0,1),$ then $T$ has a fixed point or
satisfies condition $(L)$ (see \cite[Theorem 4.7]{LlMo}). A natural question
arises whether a large collection of fixed point theorems for nonexpansive
mappings has its counterparts for mappings satisfying conditions $%
(C_{\lambda }),$ $(L)$ or $(\ast ).$ This is a non-trivial matter since some
constructions developed for nonexpansive mappings do not work properly in a
general case.

Let $C$ be a nonempty convex and weakly compact subset of a Banach space $X$%
. It was proved in \cite{Su1} that every mapping $T:C\rightarrow C$ which
satisfies condition $(C)$ has a fixed point when $X$ is UCED or satisfies
the Opial property, and in \cite{DIK}, when $X$ has property $(D).$ The
above results were generalized in \cite{LlMo} by showing that if $X$ has
normal structure, then every mapping $T:C\rightarrow C$ satisfying condition
$(L)$ has a fixed point. In particular, every continuous self-mapping of
type $(C_{\lambda })$ has a fixed point in this case. For a treatment of a
more general case of metric spaces and multivalued nonexpansive-type
mappings we refer the reader to \cite{EsLoNi} and the references given there.

Our paper is organized as follows. In Section 2 we prove that the mapping $%
T_{\gamma }=(1-\gamma )I+\gamma T,$ where $\gamma \in (0,1)$ is uniformly
asymptotically regular with respect to all $x\in C$ and all mappings from $C$
into $C$ which satisfy condition $(C_{\gamma }).$ We apply this result in
Section 3 to prove basic Lemmas \ref{3.3} and \ref{zn}. In Section 4 we are
able to adapt the proof of \cite[Theorem 9]{PrSz} and strenghten the result.
As a consequence, we show that if $C$ is a nonempty convex and weakly
compact subset of a nearly uniformly nonreasy space or a Banach space $X$
with $M(X)>1,$ then every mapping $T:C\rightarrow C$ which satisfies
condition $(C)$ and every continuous mapping $T:C\rightarrow C$ which
satisfies condition $(C_{\lambda })$ for some $\lambda \in (0,1)$ has a
fixed point. In particular, our theorems hold for both uniformly nonsquare
and uniformly noncreasy Banach spaces. In the case of uniformly nonsquare
spaces it answers Question 1 in \cite{DIK}.

\section{Asymptotic regularity}

Recall that a mapping $T:M\rightarrow M$ acting on a metric space $(M,d)$ is
said to be asymptotically regular if
\begin{equation*}
\lim_{n\rightarrow \infty }d(T^{n}x,T^{n+1}x)=0
\end{equation*}%
for all $x\in M.$ Ishikawa\ \cite{Is} proved that if $C$ is a bounded convex
subset of a Banach space $X$ and $T:C\rightarrow C$ is nonexpansive, then
the mapping $T_{\gamma }=(1-\gamma )I+\gamma T$ is asymptotically regular
for each $\gamma \in (0,1).$ Edelstein and O'Brien \cite{EdOb} showed that $%
T_{\gamma }$ is uniformly asymptotically regular over $x\in C,$ and Goebel
and Kirk \cite{GoKi1} proved that the convergence is uniform with respect to
all nonexpansive mappings from $C$ into $C$. The Ishikawa result was
extended in \cite[Lemma 6]{Su1} for mappings with condition $(C)$ and in
\cite[Theorem 4]{GLS} for mappings with condition $(C_{\lambda }).$ In this
section we prove the uniform version of that result. The proof follows in
part \cite[Lemma 1]{EdOb}.

\begin{theorem}
\label{2.1} Let $C$ be a bounded convex subset of a Banach space $X.$ Fix $%
\lambda \in (0,1),\gamma \in \lbrack \lambda ,1)$ and let $\mathcal{F}$
denote the collection of all mappings which satisfy condition $(C_{\lambda
}).$ Let $T_{\gamma }=(1-\gamma )I+\gamma T$ for $T\in \mathcal{F}.$ Then
for every $\varepsilon >0,$ there exists a positive integer $n_{0}$ such
that $\left\Vert T_{\gamma }^{n+1}x-T_{\gamma }^{n}x\right\Vert <\varepsilon
$ for every $n\geq n_{0},x\in C$ and $T\in \mathcal{F}$.
\end{theorem}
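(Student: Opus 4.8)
The plan is to fix $T\in\mathcal{F}$, fix $x\in C$, set $x_{n}=T_{\gamma }^{n}x$, and show that $d_{n}:=\Vert x_{n}-Tx_{n}\Vert \to 0$ at a rate depending only on $\gamma $, $\lambda $, $\varepsilon $ and $\diam C$ — and in no way on $x$ or on $T$; since $x_{n+1}-x_{n}=\gamma (Tx_{n}-x_{n})$, this is exactly the assertion of the theorem with $\varepsilon $ replaced by $\gamma \varepsilon $. The first key point is that condition $(C_{\lambda })$ enters the orbit only through pairs of consecutive iterates: because $\Vert x_{n}-x_{n+1}\Vert =\gamma \Vert x_{n}-Tx_{n}\Vert \geq \lambda \Vert x_{n}-Tx_{n}\Vert $ (this is where $\gamma \geq \lambda $ is used), condition $(C_{\lambda })$ applied to the pair $(x_{n},x_{n+1})$ gives $\Vert Tx_{n}-Tx_{n+1}\Vert \leq \Vert x_{n}-x_{n+1}\Vert $. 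Writing $y_{n}=Tx_{n}$, the sequences $(x_{n}),(y_{n})\subset C$ then satisfy $x_{n+1}=(1-\gamma )x_{n}+\gamma y_{n}$ together with $\Vert y_{n+1}-y_{n}\Vert \leq \Vert x_{n+1}-x_{n}\Vert $.

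Next I would record the elementary monotonicity of $(d_{n})$. From the identity $x_{n+1}-Tx_{n+1}=(1-\gamma )(x_{n}-Tx_{n})+(Tx_{n}-Tx_{n+1})$ one gets $d_{n+1}\leq (1-\gamma )d_{n}+\Vert Tx_{n}-Tx_{n+1}\Vert \leq (1-\gamma )d_{n}+\gamma d_{n}=d_{n}$, so $(d_{n})$ is nonincreasing and $\Vert x_{n+1}-x_{n}\Vert =\gamma d_{n}$ is nonincreasing as well. Equivalently, with $v_{n}=x_{n+1}-x_{n}$ and $r_{n}=y_{n+1}-y_{n}$ one has the recursion $v_{n+1}=(1-\gamma )v_{n}+\gamma r_{n}$ with $\Vert r_{n}\Vert \leq \Vert v_{n}\Vert $, and $(\Vert v_{n}\Vert )$ nonincreasing. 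This is the situation that the classical estimate is designed to handle.

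The heart of the proof is then the uniform asymptotic regularity estimate for this recursion, which I would obtain by running the argument of \cite[Lemma 1]{EdOb}. That argument produces, for each bounded set of diameter $D$, each $\gamma \in (0,1)$ and each $\varepsilon >0$, an integer $n_{0}=n_{0}(D,\gamma ,\varepsilon )$ such that \emph{any} sequences $(x_{n}),(y_{n})$ lying in the set, with $x_{n+1}=(1-\gamma )x_{n}+\gamma y_{n}$ and $\Vert y_{n+1}-y_{n}\Vert \leq \Vert x_{n+1}-x_{n}\Vert $, satisfy $\Vert x_{n+1}-x_{n}\Vert <\varepsilon $ for all $n\geq n_{0}$; the proof is a combinatorial/real-variable estimate on $(v_{n})$ in which the diameter bound is used in an essential way, and this is where the whole difficulty of the theorem resides. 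Applying it with $y_{n}=Tx_{n}$ and $D=\diam C$, and observing that $n_{0}$ depends on none of $x$, $T$, $\lambda $ but only on $D$, $\gamma $ and the target, gives precisely the uniform conclusion after replacing $\varepsilon $ by $\gamma \varepsilon $.

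The step I expect to be the real obstacle is verifying that \cite[Lemma 1]{EdOb}, which is phrased for genuinely nonexpansive mappings, is in fact fed only by the weak hypothesis at our disposal: I must check that its argument never compares $Tx_{n}$ with $Tx_{m}$ for indices $n,m$ that are far apart — for such pairs $(C_{\lambda })$ yields nothing, since $\lambda d_{n}\leq \Vert x_{n}-x_{m}\Vert $ may fail — but uses only $\Vert Tx_{n}-Tx_{n+1}\Vert \leq \Vert x_{n}-x_{n+1}\Vert $ together with the monotonicity of $(d_{n})$. Should any comparison of non-consecutive images turn up, I would replace it by the general inequality valid under condition $(C_{\lambda })$, namely $\Vert x-Ty\Vert \leq \kappa (\lambda )\Vert x-Tx\Vert +\Vert x-y\Vert $ for all $x,y\in C$ (with $\kappa (\lambda )$ an explicit constant depending only on $\lambda $, cf. \cite{GLS}), combined again with $(d_{n})$ nonincreasing; since $\kappa (\lambda )$ depends only on $\lambda $, all bounds stay uniform in $x$ and $T$.
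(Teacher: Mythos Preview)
Your plan is correct and coincides with the paper's own proof: both reduce condition $(C_{\lambda})$ to the single consecutive-iterate inequality $\Vert Tx_{i}-Tx_{i-1}\Vert \leq \Vert x_{i}-x_{i-1}\Vert$ (via $\gamma \geq \lambda$), deduce monotonicity of $\Vert x_{i+1}-x_{i}\Vert$, and then feed the resulting recursion into the Edelstein--O'Brien argument, whose bounds depend only on $\gamma$ and $\diam C$. The only difference is presentational: rather than citing \cite[Lemma~1]{EdOb} and checking its hypotheses, the paper reproduces the pigeonhole/functional argument in full, thereby confirming your expectation that only consecutive comparisons $\Vert Tx_{i}-Tx_{i-1}\Vert \leq \Vert x_{i}-x_{i-1}\Vert$ are ever used---your fallback via the $\kappa(\lambda)$ inequality from \cite{GLS} is not needed.
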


\begin{proof}
Without loss of generality we can assume that $\diam C=1.$ Suppose, contrary
to our claim, that there exists $\delta >0$ such that%
\begin{equation}
(\forall n_{0}>0)\ (\exists n\geq n_{0},x\in C,T\in \mathcal{F})\mathcal{\ }%
\Vert T_{\gamma }^{n+1}x-T_{\gamma }^{n}x\Vert \geq \delta .  \label{1}
\end{equation}%
Fix a positive integer $M>2/\delta $ and let $L=\lceil \frac{1}{\gamma
(1-\gamma )^{M}}\rceil $ denote the smallest integer not less than $\frac{1}{%
\gamma (1-\gamma )^{M}}.$ Then, by (\ref{1}), there exist $N>ML,x_{0}\in C$
and $T\in \mathcal{F}$ such that
\begin{equation*}
\Vert T_{\gamma }^{N+1}x_{0}-T_{\gamma }^{N}x_{0}\Vert \geq \delta .
\end{equation*}%
Let $x_{i}=T_{\gamma }^{i}x_{0}.$ Since%
\begin{equation*}
\lambda \Vert Tx_{i-1}-x_{i-1}\Vert =\frac{\lambda }{\gamma }\Vert T_{\gamma
}x_{i-1}-x_{i-1}\Vert \leq \Vert x_{i}-x_{i-1}\Vert ,
\end{equation*}%
$i=1,2,...,$ and $T$ satisfies condition $(C_{\lambda })$, we get
\begin{equation*}
\Vert Tx_{i}-Tx_{i-1}\Vert \leq \Vert x_{i}-x_{i-1}\Vert
\end{equation*}%
and hence%
\begin{equation*}
\Vert T_{\gamma }x_{i}-T_{\gamma }x_{i-1}\Vert \leq (1-\gamma )\Vert
x_{i}-x_{i-1}\Vert +\gamma \Vert Tx_{i}-Tx_{i-1}\Vert \leq \Vert
x_{i}-x_{i-1}\Vert
\end{equation*}%
for every positive integer $i.$ Thus
\begin{equation}
\Vert x_{1}-x_{0}\Vert \geq \Vert x_{2}-x_{1}\Vert \geq \ldots \geq \Vert
x_{N+1}-x_{N}\Vert \geq \delta  \label{2}
\end{equation}%
and%
\begin{equation}
\left\Vert \frac{1}{\gamma }(x_{i+1}-x_{i})-\frac{1-\gamma }{\gamma }%
(x_{i}-x_{i-1})\right\Vert =\Vert Tx_{i}-Tx_{i-1}\Vert \leq \Vert
x_{i}-x_{i-1}\Vert  \label{3}
\end{equation}%
for all $i=1,2,\ldots ,N$. We can now follow the arguments from \cite{EdOb}.
Notice that
\begin{equation*}
\lbrack \delta ,1]\subset \bigcup\nolimits_{i\mathbb{=}1}^{L}[b_{i},b_{i}+%
\gamma (1-\gamma )^{M}],
\end{equation*}%
where $b_{i}=\delta +(i-1)\gamma (1-\gamma )^{M}.$ Since $\left\{ \Vert
x_{Mi+1}-x_{Mi}\Vert :0\leq i\leq L\right\} $ has $L+1$ elements which
belong to $[\delta ,1]$ by $N>ML$ and (\ref{2}), it follows from the
pigeonhole principle that there exists an interval $I=[b,b+\gamma (1-\gamma
)^{M}]$ with $b\geq \delta $ and $0\leq i_{1}<i_{2}\leq L$ such that $\Vert
x_{Mi_{1}+1}-x_{Mi_{1}}\Vert ,\Vert x_{Mi_{2}+1}-x_{Mi_{2}}\Vert \in I.$
Hence by (\ref{2}),
\begin{equation}
\Vert x_{i+1}-x_{i}\Vert \in I\quad \text{for}\quad i=Mi_{1},Mi_{1}+1,\ldots
,Mi_{2}.  \label{4}
\end{equation}%
In particular, $\Vert x_{K+M+1}-x_{K+M}\Vert \in I,$ where $K=Mi_{1}.$
Select a functional $f\in S_{X^{\ast }}$ such that%
\begin{equation*}
f(x_{K+M+1}-x_{K+M})=\Vert x_{K+M+1}-x_{K+M}\Vert \geq b.
\end{equation*}

Then (\ref{3}) and (\ref{4}) imply
\begin{align*}
& \frac{1}{\gamma }f(x_{K+M+1}-x_{K+M})-\frac{1-\gamma }{\gamma }%
f(x_{K+M}-x_{K+M-1}) \\
& \leq \left\Vert \frac{1}{\gamma }(x_{K+M+1}-x_{K+M})-\frac{1-\gamma }{%
\gamma }(x_{K+M}-x_{K+M-1})\right\Vert \\
& \leq \Vert x_{K+M}-x_{K+M-1}\Vert \leq b+\gamma (1-\gamma )^{M},
\end{align*}%
so that
\begin{equation*}
\frac{b}{\gamma }-\frac{1-\gamma }{\gamma }f(x_{K+M}-x_{K+M-1})\leq b+\gamma
(1-\gamma )^{M}
\end{equation*}%
and hence
\begin{equation*}
f(x_{K+M}-x_{K+M-1})\geq b-\gamma ^{2}(1-\gamma )^{M-1}.
\end{equation*}%
Similarly,
\begin{align*}
b+(1-\gamma )^{M}\gamma & \geq \frac{1}{\gamma }f(x_{K+M}-x_{K+M-1})-\frac{%
1-\gamma }{\gamma }f(x_{K+M-1}-x_{K+M-2}) \\
& \geq \frac{1}{\gamma }\left( b-(1-\gamma )^{M}\gamma ^{2}\left( \frac{1}{%
1-\gamma }\right) \right) -\frac{1-\gamma }{\gamma }f(x_{K+M-1}-x_{K+M-2}),
\end{align*}%
and hence
\begin{equation*}
f(x_{K+M-1}-x_{K+M-2})\geq b-(1-\gamma )^{M}\gamma ^{2}\left( \frac{1}{%
1-\gamma }+\frac{1}{(1-\gamma )^{2}}\right) \geq b-\gamma (1-\gamma )^{M-2}.
\end{equation*}%
In general,
\begin{equation*}
f(x_{K+M+1-i}-x_{K+M-i})\geq b-\gamma (1-\gamma )^{M-i}
\end{equation*}%
for all $i=0,1,\ldots ,M$. Thus
\begin{align*}
f(x_{K+M+1})& \geq f(x_{K+M})+b \\
& \vdots \\
& \geq f(x_{K+M+1-i})+ib-\gamma ((1-\gamma )^{M-1}+...+(1-\gamma )^{M+1-i})
\\
& \vdots \\
& \geq f(x_{K+1})+Mb-\gamma ((1-\gamma )^{M-1}+...+(1-\gamma )) \\
& \geq f(x_{K+1})+Mb-1.
\end{align*}

But $b\geq \delta $ implies that $Mb\geq M\delta >2$, and so $\left\Vert
x_{K+M+1}-x_{K+1}\right\Vert \geq f(x_{K+M+1}-x_{K+1})>1$ contradicting the
assumption that $\diam C=1$.
\end{proof}

\section{Basic lemmas}

Let $C$ be a nonempty weakly compact convex subset of a Banach space $X$ and
$T:C\rightarrow C.$ It follows from the Kuratowski-Zorn lemma that there
exists a minimal (in the sense of inclusion) convex and weakly compact set $%
K\subset C$ which is invariant under $T.$ The first lemma below is a
counterpart of the Goebel-Karlovitz lemma (see \cite{Go, Ka}). It was proved
by Dhompongsa and Kaewcharoen \cite[Theorem 4.14]{DhKa} in the case of
mappings which satisfy condition $(C),$ and by Butsan, Dhompongsa and
Takahashi \cite[Lemma 3.2]{BDT} in the case of mappings satisfying condition
$(\ast ).$ Denote by
\begin{equation*}
r(K,(x_{n}))=\inf \{\limsup_{n\rightarrow \infty }\Vert x_{n}-x\Vert :x\in
K\}
\end{equation*}%
the asymptotic radius of a sequence $(x_{n})$ relative to $K$.

\begin{lemma}
\label{GKprim}Let $K$ be a nonempty convex weakly compact subset of a Banach
space $X$ which is minimal invariant under $T:K\rightarrow K.$ If $T$
satisfies condition $(\ast )$ (condition $(C)$, in particular), then there
exists an approximate fixed point sequence $(x_{n})$ for $T$ such that
\begin{equation*}
\lim_{n\rightarrow \infty }\Vert x_{n}-x\Vert =\inf \{r(K,(y_{n})):(y_{n})%
\text{ is an afps in }K\}\text{ }
\end{equation*}%
for every $x\in K.$
\end{lemma}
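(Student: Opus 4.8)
The plan is to produce the desired afps by a diagonal/limiting argument combined with a passage to a minimal set, in the spirit of the Goebel--Karlovitz lemma. Write $r^{\ast}=\inf\{r(K,(y_{n})):(y_{n})\text{ is an afps in }K\}$. Since $T$ satisfies condition $(\ast)$, part (i) guarantees that $K$ itself contains an afps, so the infimum is taken over a nonempty set and $r^{\ast}<\infty$. First I would choose, for each $k$, an afps $(y_{n}^{(k)})$ in $K$ with $r(K,(y_{n}^{(k)}))<r^{\ast}+\tfrac1k$, and let $E_{k}=A(K,(y_{n}^{(k)}))$ be its asymptotic center relative to $K$; by weak compactness $E_{k}$ is a nonempty, closed, convex subset of $K$, and by part (ii) of condition $(\ast)$ it is $T$-invariant. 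On the closed convex $T$-invariant set $E_{k}$ we again invoke part (i) to get an afps for $T$ lying in $E_{k}$; by the definition of the asymptotic center and the choice of $(y_{n}^{(k)})$, any such afps $(z_{n})$ satisfies $\limsup_{n}\|z_{n}-x\|\le r(K,(y_{n}^{(k)}))<r^{\ast}+\tfrac1k$ for every $x\in E_{k}$, in particular (after relabelling) we obtain an afps in $K$ whose asymptotic radius relative to $K$ is within $1/k$ of $r^{\ast}$ \emph{and} which is ``almost constant'' in the sense that $\limsup_{n}\|z_{n}-x\|$ is nearly independent of $x$ over a large set.

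Second, I would upgrade ``the limit superior is nearly constant'' to ``the actual limit exists and equals $r^{\ast}$ for every $x\in K$.'' The key observation is that for a minimal invariant set $K$ the Goebel--Karlovitz phenomenon forces, for any afps $(x_{n})$ in $K$, the function $x\mapsto \limsup_{n}\|x_{n}-x\|$ to be constant on $K$ equal to $r(K,(x_{n}))$; this is exactly the statement proved for condition $(\ast)$ in \cite[Lemma 3.2]{BDT}, which I may quote. Thus each afps $(z_{n})$ produced above already has $\limsup_{n}\|z_{n}-x\|\equiv r(K,(z_{n}))$ on $K$. Now diagonalize: from the sequence of afps $(z_{n})$ obtained for each $k$, with $r(K,(z_{n}^{(k)}))\to r^{\ast}$, extract by a standard diagonal procedure a single sequence $(x_{n})$ that is still an afps for $T$ in $K$ (using $\|Tx_{n}-x_{n}\|\to 0$ along the diagonal) and for which $\limsup_{n}\|x_{n}-x\|=r^{\ast}$ for every $x\in K$. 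Finally, to replace $\limsup$ by $\lim$, pass to a subsequence of $(x_{n})$ along which $\|x_{n}-x_{0}\|$ converges for one fixed $x_{0}$; then the constancy of $x\mapsto\limsup_{n}\|x_{n}-x\|$ on $K$, together with the inequality $\big|\,\|x_{n}-x\|-\|x_{n}-x_{0}\|\,\big|\le\|x-x_{0}\|$, lets one conclude $\lim_{n}\|x_{n}-x\|=r^{\ast}$ simultaneously for all $x\in K$ (a convexity/equi-Lipschitz argument: the $\limsup$ and $\liminf$ both equal $r^{\ast}$ because otherwise averaging two ``directions'' would beat the infimum).

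The main obstacle I anticipate is the bookkeeping in the diagonalization: I must choose the indices so that (a) $\|Tx_{n}-x_{n}\|\to0$, (b) $\limsup_{n}\|x_{n}-x\|$ is controlled \emph{uniformly} enough to land exactly on $r^{\ast}$ rather than on something merely $\le r^{\ast}+\varepsilon$, and (c) the replacement of $\limsup$ by $\lim$ survives the extraction. The cleanest route is probably not a literal diagonal sequence but an appeal to the fact that the set of afps in $K$, equipped with a suitable pseudometric coming from asymptotic radii, behaves well under the asymptotic-center operation: iterating $D\mapsto A(K,(\cdot))$ and using minimality of $K$ collapses everything to radius $r^{\ast}$ in one step, which is why condition $(\ast)$ was designed with clause (ii). So the real content is just assembling \cite[Lemma 3.2]{BDT} (constancy of $\limsup\|x_{n}-\cdot\|$ on a minimal set) with clauses (i) and (ii) of condition $(\ast)$ and a one-line subsequence argument to get genuine convergence; no new estimates beyond those are needed.
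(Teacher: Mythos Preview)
The paper does not supply its own proof of this lemma; it simply attributes the result to \cite[Theorem~4.14]{DhKa} for condition $(C)$ and to \cite[Lemma~3.2]{BDT} for condition $(\ast)$. So there is no in-paper argument to compare your sketch against, and your outline is essentially the standard reconstruction: minimality of $K$ together with part~(ii) of condition $(\ast)$ forces $A(K,(y_n))=K$ for every afps $(y_n)$, so $x\mapsto\limsup_n\|y_n-x\|$ is constantly $r(K,(y_n))$; then one diagonalizes over afps with radii tending to $r^{\ast}$ and observes that any subsequence of the resulting afps with $\limsup_n\|x_n-x_1\|<r^{\ast}$ would itself be an afps of asymptotic radius below $r^{\ast}$, a contradiction, so the $\limsup$ is a genuine limit.

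Two points deserve tightening. First, the assertion in your opening paragraph that an afps $(z_n)$ lying inside $E_k=A(K,(y_n^{(k)}))$ automatically satisfies $\limsup_n\|z_n-x\|\le r(K,(y_n^{(k)}))$ for $x\in E_k$ is unjustified: membership in the asymptotic center bounds distances \emph{to} $(y_n^{(k)})$, not distances between points of $E_k$. This detour is in any case unnecessary, since minimality gives $E_k=K$ outright and you only need the original sequences $(y_n^{(k)})$. Second, invoking \cite[Lemma~3.2]{BDT} for the constancy step is circular here, because that is exactly the reference the paper cites for the lemma itself; if you want a self-contained argument, just write the one line ``$A(K,(y_n))$ is nonempty, closed, convex and $T$-invariant by $(\ast)$(ii), hence equals $K$ by minimality'' and proceed.
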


Llor\'{e}ns Fuster and Moreno G\'{a}lvez \cite[Th. 4.7]{LlMo} proved that if
$T:C\rightarrow C$ is continuous and satisfies condition $(C_{\lambda })$
for some $\lambda \in (0,1),$ then $T$ has a fixed point or satisfies
condition $(L).$ Since the set consisting of a single fixed point of $T$ is
minimal invariant under $T$ and condition $(L)$ implies condition $(\ast )$,
we obtain the following corollary.

\begin{lemma}
\label{GKbis}The conclusion of Lemma \ref{GKprim} is valid for continuous
mappings which satisfy condition $(C_{\lambda })$ for some $\lambda \in
(0,1).$
\end{lemma}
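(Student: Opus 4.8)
The plan is to reduce the statement for continuous mappings satisfying $(C_{\lambda})$ to the already-established Lemma~\ref{GKprim} for mappings satisfying condition $(\ast)$. The key observation, recalled in the paragraph preceding the statement, is the dichotomy from \cite[Th.~4.7]{LlMo}: if $T:K\rightarrow K$ is continuous and satisfies $(C_{\lambda})$ for some $\lambda\in(0,1)$, then either $T$ has a fixed point, or $T$ satisfies condition $(L)$.

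First I would invoke this dichotomy for the given minimal invariant set $K$ under $T$. In the second case---$T$ satisfies $(L)$---one uses the (already noted) implication that condition $(L)$ implies condition $(\ast)$, so Lemma~\ref{GKprim} applies verbatim to $T$ on $K$, yielding the desired afps $(x_{n})$ with $\lim_{n\to\infty}\Vert x_{n}-x\Vert=\inf\{r(K,(y_{n})):(y_{n})\text{ an afps in }K\}$ for every $x\in K$. In the first case---$T$ has a fixed point $z\in K$---I would argue that $\{z\}$ is itself a convex weakly compact $T$-invariant subset of $K$; since $K$ is minimal invariant, $K=\{z\}$. Then the constant sequence $x_{n}=z$ is trivially an afps, it is the only afps in $K$, and $\lim_{n\to\infty}\Vert x_{n}-x\Vert=0=r(K,(x_{n}))=\inf\{r(K,(y_{n})):(y_{n})\text{ an afps in }K\}$ for the unique point $x=z\in K$, so the conclusion holds trivially.

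The only mild subtlety is checking that the dichotomy of \cite[Th.~4.7]{LlMo} can be applied with $K$ in place of the original domain $C$: one needs $K$ to be a nonempty convex weakly compact set and $T$ to map $K$ into $K$, which is exactly the hypothesis of Lemma~\ref{GKbis}, and continuity of $T$ restricted to $K$ follows from continuity of $T$. I do not expect any real obstacle here; the proof is essentially a two-line deduction chaining $(C_{\lambda})$ + continuity $\Rightarrow$ fixed point or $(L)$, then $(L)\Rightarrow(\ast)$, then Lemma~\ref{GKprim}, with the fixed-point alternative handled by minimality forcing $K$ to be a singleton.
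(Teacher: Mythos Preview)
Your proposal is correct and follows essentially the same approach as the paper: the paper's justification (given in the paragraph preceding the lemma) invokes the dichotomy of \cite[Th.~4.7]{LlMo}, uses that a singleton fixed point is minimal invariant to handle the first alternative, and uses $(L)\Rightarrow(\ast)$ together with Lemma~\ref{GKprim} for the second. Your write-up simply fills in a few details (applying the dichotomy on $K$ rather than $C$, and spelling out why the constant sequence works when $K$ is a singleton) that the paper leaves implicit.
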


Now let $(x_{n})$ be a weakly null afps sequence for $T$ in $C.$ Fix $t<1$
and put $v_{n}=tx_{n}.$ The following technical lemma deals with the
behaviour of sequences $(T_{\gamma }^{k}v_{n})_{n\in \mathbb{N}},$ $%
k=1,2,....$

\begin{lemma}
\label{3.3} Assume that $T:C\rightarrow C$ satisfies condition $(C_{\lambda
})$ for some $\lambda \in (0,1).$ Fix $\gamma \in \lbrack \lambda ,1),$ a
positive integer $N,$ $0<\varepsilon <\frac{1}{10N}$ and $\frac{2}{3}%
+2N\varepsilon <t<1-2\varepsilon .$ Suppose that $(x_{n})$ is a weakly null
sequence in $C$ such that $\diam(x_{n})=1$ and the following conditions are
satisfied for every $n,m\in \mathbb{N}$ and $k=1,...,N$:

\begin{enumerate}
\item[(i)] a sequence $(T_{\gamma }^{k}v_{n})_{n\in \mathbb{N}},$ where $%
v_{n}=tx_{n},$ converges weakly to a point $y_{k}\in C$,

\item[(ii)] $\Vert T_{\gamma }^{k}v_{n}-T_{\gamma }^{k}v_{m}\Vert
>\liminf_{i}\Vert T_{\gamma }^{k}v_{n}-T_{\gamma }^{k}v_{i}\Vert
-\varepsilon ,$

\item[(iii)] $\min \{\Vert x_{n}\Vert ,\Vert x_{n}-x_{m}\Vert ,\Vert
x_{n}-y_{k}\Vert \}>1-\varepsilon ,$

\item[(iv)] $\Vert Tx_{n}-x_{n}\Vert <\varepsilon .$
\end{enumerate}

Then, for every $n,m\in \mathbb{N}$ and $k=1,...,N,$%
\begin{equation}
t-(k+2)\varepsilon <\left\Vert T_{\gamma }^{k}v_{n}-T_{\gamma
}^{k}v_{m}\right\Vert \leq t,  \label{con1}
\end{equation}%
\begin{equation}
1-t-\varepsilon <\left\Vert T_{\gamma }^{k}v_{n}-x_{n}\right\Vert
<1-t+k\varepsilon .  \label{con2}
\end{equation}
\end{lemma}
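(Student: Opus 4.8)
The plan is to establish both (\ref{con1}) and (\ref{con2}) simultaneously by induction on $k$, starting from $k=0$ with the convention $T_{\gamma}^{0}v_{n}=v_{n}$. For the base case, since $v_{n}=tx_{n}$ we have $\Vert v_{n}-v_{m}\Vert=t\Vert x_{n}-x_{m}\Vert$ and $\Vert v_{n}-x_{n}\Vert=(1-t)\Vert x_{n}\Vert$. Because $x_{m}\rightharpoonup 0$, weak lower semicontinuity of the norm gives $\Vert x_{n}\Vert\le\liminf_{m}\Vert x_{n}-x_{m}\Vert\le\diam(x_{n})=1$, whence $\Vert v_{n}-v_{m}\Vert\le t$ and $\Vert v_{n}-x_{n}\Vert\le 1-t$; the matching lower bounds $\Vert v_{n}-v_{m}\Vert>t(1-\varepsilon)$ and $\Vert v_{n}-x_{n}\Vert>(1-t)(1-\varepsilon)$ follow from (iii). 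These are the $k=0$ instances of (\ref{con1}) and (\ref{con2}), with room to spare.

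Now assume (\ref{con1}) and (\ref{con2}) hold for $k-1$ (for $k=1$ this is the base case); I will prove them for $k$ in the order: approximate nonexpansiveness of $T$ at $x_{n}$, the upper bound in (\ref{con2}), approximate nonexpansiveness of $T_{\gamma}$ on $(T_{\gamma}^{k-1}v_{n},T_{\gamma}^{k-1}v_{m})$, the upper bound in (\ref{con1}), and finally the two lower bounds. By (iv), $\lambda\Vert x_{n}-Tx_{n}\Vert<\lambda\varepsilon\le\varepsilon$, while by the inductive hypothesis (or base case) $\Vert x_{n}-T_{\gamma}^{k-1}v_{n}\Vert>1-t-\varepsilon>\varepsilon$ since $t<1-2\varepsilon$; hence $(C_{\lambda})$ applied with $x=x_{n}$, $y=T_{\gamma}^{k-1}v_{n}$ gives $\Vert Tx_{n}-T(T_{\gamma}^{k-1}v_{n})\Vert\le\Vert x_{n}-T_{\gamma}^{k-1}v_{n}\Vert$. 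Writing $T_{\gamma}^{k}v_{n}-x_{n}=(1-\gamma)(T_{\gamma}^{k-1}v_{n}-x_{n})+\gamma(T(T_{\gamma}^{k-1}v_{n})-Tx_{n})+\gamma(Tx_{n}-x_{n})$ and using this inequality together with (iv) yields $\Vert T_{\gamma}^{k}v_{n}-x_{n}\Vert\le\Vert T_{\gamma}^{k-1}v_{n}-x_{n}\Vert+\gamma\varepsilon$, and the inductive form of (\ref{con2}) then delivers $\Vert T_{\gamma}^{k}v_{n}-x_{n}\Vert<1-t+k\varepsilon$.

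For the upper bound in (\ref{con1}) I apply $(C_{\lambda})$ with $x=T_{\gamma}^{k-1}v_{n}$ and $y=T_{\gamma}^{k-1}v_{m}$. Since $x-Tx=\frac{1}{\gamma}(x-T_{\gamma}x)$, the triangle inequality together with the step just performed and (iv) gives $\Vert x-Tx\Vert=\frac{1}{\gamma}\Vert T_{\gamma}^{k-1}v_{n}-T_{\gamma}^{k}v_{n}\Vert\le 2\Vert T_{\gamma}^{k-1}v_{n}-x_{n}\Vert+\varepsilon\le 2(1-t)+(2k-1)\varepsilon$, while the inductive form of (\ref{con1}) gives $\Vert x-y\Vert>t-(k+1)\varepsilon$. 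The assumption $t>\frac{2}{3}+2N\varepsilon$ is exactly what forces $2(1-t)+(2k-1)\varepsilon\le t-(k+1)\varepsilon$, so $\lambda\Vert x-Tx\Vert\le\Vert x-y\Vert$, and since $T_{\gamma}=(1-\gamma)I+\gamma T$ we get $\Vert T_{\gamma}^{k}v_{n}-T_{\gamma}^{k}v_{m}\Vert\le\Vert T_{\gamma}^{k-1}v_{n}-T_{\gamma}^{k-1}v_{m}\Vert\le t$.

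Finally, the lower bounds use weak lower semicontinuity and the weak limits $y_{k}$. From (iii) and the upper bound in (\ref{con2}) just proved, $\Vert T_{\gamma}^{k}v_{n}-y_{k}\Vert\ge\Vert x_{n}-y_{k}\Vert-\Vert x_{n}-T_{\gamma}^{k}v_{n}\Vert>(1-\varepsilon)-(1-t+k\varepsilon)=t-(k+1)\varepsilon$; since $T_{\gamma}^{k}v_{i}\rightharpoonup y_{k}$ by (i), this gives $\liminf_{i}\Vert T_{\gamma}^{k}v_{n}-T_{\gamma}^{k}v_{i}\Vert\ge\Vert T_{\gamma}^{k}v_{n}-y_{k}\Vert$, and (ii) then yields $\Vert T_{\gamma}^{k}v_{n}-T_{\gamma}^{k}v_{m}\Vert>t-(k+2)\varepsilon$. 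For the lower bound in (\ref{con2}), fix $n$; then $\Vert T_{\gamma}^{k}v_{m}-x_{n}\Vert\le\Vert T_{\gamma}^{k}v_{m}-T_{\gamma}^{k}v_{n}\Vert+\Vert T_{\gamma}^{k}v_{n}-x_{n}\Vert\le t+\Vert T_{\gamma}^{k}v_{n}-x_{n}\Vert$ by the bound just obtained, and letting $m\to\infty$ while using $T_{\gamma}^{k}v_{m}-x_{n}\rightharpoonup y_{k}-x_{n}$ and (iii) gives $1-\varepsilon<t+\Vert T_{\gamma}^{k}v_{n}-x_{n}\Vert$, i.e. $\Vert T_{\gamma}^{k}v_{n}-x_{n}\Vert>1-t-\varepsilon$. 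I expect the main obstacle to be the third paragraph: obtaining the clean estimate $\Vert x-Tx\Vert\le 2(1-t)+(2k-1)\varepsilon$ and verifying that the threshold $\frac{2}{3}+2N\varepsilon$ on $t$ is precisely strong enough to keep condition $(C_{\lambda})$ applicable to every iterated pair up to level $N$; once that is secured, the remaining estimates are routine bookkeeping with the triangle inequality and weak lower semicontinuity.
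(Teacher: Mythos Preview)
Your proof is correct and follows the same inductive scheme as the paper: establish the upper bound in (\ref{con2}) by pushing one more application of $(C_{\lambda})$ through $x_{n}$, then the upper bound in (\ref{con1}) by applying $(C_{\lambda})$ to the pair $(T_{\gamma}^{k-1}v_{n},T_{\gamma}^{k-1}v_{m})$, and finally read off both lower bounds from weak lower semicontinuity via (i)--(iii).

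The one genuine difference is how you justify the hypothesis of $(C_{\lambda})$ for the pair $(T_{\gamma}^{k-1}v_{n},T_{\gamma}^{k-1}v_{m})$. The paper observes, exactly as in the proof of Theorem~\ref{2.1}, that the increments $\Vert T_{\gamma}^{j+1}v_{n}-T_{\gamma}^{j}v_{n}\Vert$ are nonincreasing in $j$, so $\Vert T_{\gamma}^{k-1}v_{n}-T(T_{\gamma}^{k-1}v_{n})\Vert\le\Vert T_{\gamma}v_{n}-v_{n}\Vert/\gamma<2(1-t)+\varepsilon$, a bound \emph{independent of $k$}. You instead route the estimate through $x_{n}$ and the inductive (\ref{con2}), obtaining the $k$-dependent bound $2(1-t)+(2k-1)\varepsilon$. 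Both bounds sit below $t-(k+1)\varepsilon$ once $t>\tfrac{2}{3}+2N\varepsilon$, so either works. The paper's route makes the appearance of $\tfrac{2}{3}$ slightly more transparent and reuses the Ishikawa-type monotonicity already established; your route is self-contained and avoids invoking Theorem~\ref{2.1}, at the cost of a bound that deteriorates with $k$ (harmlessly, given the hypotheses).
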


\begin{proof}
Fix $n,m\in \mathbb{N}$ and note that
\begin{equation*}
t-\varepsilon <\Vert v_{n}-v_{m}\Vert =t\Vert x_{n}-x_{m}\Vert \leq t,
\end{equation*}%
and%
\begin{equation*}
1-t-\varepsilon <\Vert x_{n}-v_{n}\Vert =(1-t)\left\Vert x_{n}\right\Vert
\leq (1-t)\diam(x_{n})\leq 1-t.
\end{equation*}%
Since
\begin{equation*}
\Vert Tx_{n}-x_{n}\Vert <\varepsilon <1-t-\varepsilon <\Vert
x_{n}-v_{n}\Vert ,\ (t<1-2\varepsilon ),
\end{equation*}%
it follows from condition $(C_{\lambda })$ that%
\begin{equation*}
\Vert Tx_{n}-Tv_{n}\Vert \leq \Vert x_{n}-v_{n}\Vert .
\end{equation*}%
Hence%
\begin{equation}
\Vert T_{\gamma }x_{n}-T_{\gamma }v_{n}\Vert \leq \gamma \Vert
Tx_{n}-Tv_{n}\Vert +(1-\gamma )\Vert x_{n}-v_{n}\Vert \leq \Vert
x_{n}-v_{n}\Vert \leq 1-t,  \label{fact1}
\end{equation}%
and%
\begin{align}
\Vert T_{\gamma }v_{n}-v_{n}\Vert & =\gamma \Vert Tv_{n}-v_{n}\Vert \leq
\Vert Tv_{n}-Tx_{n}\Vert +\Vert Tx_{n}-x_{n}\Vert +\Vert x_{n}-v_{n}\Vert
\label{fact2} \\
& <2\Vert x_{n}-v_{n}\Vert +\varepsilon \leq 2(1-t)+\varepsilon .  \notag
\end{align}%
We shall also use, for each $k\leq N,$ the following estimation which
follows from the weak lower semicontinuity of the norm:%
\begin{align}
1-\varepsilon & <\Vert x_{n}-y_{k}\Vert \leq \liminf_{m}\Vert
x_{n}-T_{\gamma }^{k}v_{m}\Vert  \label{fact3} \\
& \leq \Vert x_{n}-T_{\gamma }^{k}v_{n}\Vert +\liminf_{m}\Vert T_{\gamma
}^{k}v_{n}-T_{\gamma }^{k}v_{m}\Vert .  \notag
\end{align}%
Now we proceed by induction on $k.$

For $k=1,$ notice that
\begin{equation*}
\Vert T_{\gamma }v_{n}-v_{n}\Vert <2(1-t)+\varepsilon <t-\varepsilon <\Vert
v_{n}-v_{m}\Vert ,\ (t>\frac{2}{3}+\frac{2}{3}\varepsilon ),
\end{equation*}%
and it follows from condition $(C_{\lambda })$ that
\begin{equation}
\left\Vert T_{\gamma }v_{n}-T_{\gamma }v_{m}\right\Vert \leq \left\Vert
v_{n}-v_{m}\right\Vert \leq t.  \label{fact4}
\end{equation}%
Furthermore,%
\begin{equation}
\left\Vert T_{\gamma }v_{n}-x_{n}\right\Vert \leq \Vert T_{\gamma
}v_{n}-T_{\gamma }x_{n}\Vert +\Vert T_{\gamma }x_{n}-x_{n}\Vert
<1-t+\varepsilon ,  \label{fact5}
\end{equation}%
by (\ref{fact1}). To prove the reverse inequalities, notice that by (\ref%
{fact3}),%
\begin{equation*}
\left\Vert T_{\gamma }v_{n}-T_{\gamma }v_{m}\right\Vert >\liminf_{m}\Vert
T_{\gamma }v_{n}-T_{\gamma }v_{m}\Vert -\varepsilon >1-\varepsilon -\Vert
x_{n}-T_{\gamma }v_{n}\Vert -\varepsilon ,
\end{equation*}%
and it follows from (\ref{fact5}) that%
\begin{equation*}
\left\Vert T_{\gamma }v_{n}-T_{\gamma }v_{m}\right\Vert >1-\varepsilon
-(1-t+\varepsilon )-\varepsilon =t-3\varepsilon .
\end{equation*}%
Finally, by (\ref{fact3}) and (\ref{fact4}),%
\begin{equation*}
\left\Vert T_{\gamma }v_{n}-x_{n}\right\Vert >1-\varepsilon
-\liminf_{m}\Vert T_{\gamma }v_{n}-T_{\gamma }v_{m}\Vert \geq
1-t-\varepsilon .
\end{equation*}

Now suppose the lemma is true for a fixed $k<N.$ Then%
\begin{equation}
\left\Vert T_{\gamma }^{k+1}v_{n}-T_{\gamma }^{k+1}v_{m}\right\Vert \leq
\left\Vert T_{\gamma }^{k}v_{n}-T_{\gamma }^{k}v_{m}\right\Vert \leq t,
\label{fact6}
\end{equation}%
since (as in the proof of Theorem \ref{2.1})
\begin{align*}
\left\Vert T_{\gamma }T_{\gamma }^{k}v_{n}-T_{\gamma }^{k}v_{n}\right\Vert &
\leq \left\Vert T_{\gamma }^{k}v_{n}-T_{\gamma }^{k-1}v_{n}\right\Vert \leq
...\leq \left\Vert T_{\gamma }v_{n}-v_{n}\right\Vert \\
& <2(1-t)+\varepsilon <t-(k+2)\varepsilon <\left\Vert T_{\gamma
}^{k}v_{n}-T_{\gamma }^{k}v_{m}\right\Vert ,
\end{align*}%
(notice that $t>\frac{2}{3}+\frac{(k+3)\varepsilon }{3}$). Furthermore, by
induction assumption,%
\begin{equation*}
\Vert T_{\gamma }x_{n}-x_{n}\Vert <\varepsilon <1-t-\varepsilon <\Vert
x_{n}-T_{\gamma }^{k}v_{n}\Vert ,
\end{equation*}%
and hence
\begin{equation*}
\Vert T_{\gamma }^{k+1}v_{n}-T_{\gamma }x_{n}\Vert \leq \Vert T_{\gamma
}^{k}v_{n}-x_{n}\Vert .
\end{equation*}%
We thus get%
\begin{align}
\left\Vert T_{\gamma }^{k+1}v_{n}-x_{n}\right\Vert & \leq \Vert T_{\gamma
}^{k+1}v_{n}-T_{\gamma }x_{n}\Vert +\Vert T_{\gamma }x_{n}-x_{n}\Vert
\label{fact7} \\
& <\Vert T_{\gamma }^{k}v_{n}-x_{n}\Vert +\varepsilon <1-t+(k+1)\varepsilon .
\notag
\end{align}%
To prove the reverse inequalities, notice that by (ii), (\ref{fact3}) and (%
\ref{fact7}),%
\begin{align*}
\left\Vert T_{\gamma }^{k+1}v_{n}-T_{\gamma }^{k+1}v_{m}\right\Vert &
>\liminf_{i}\Vert T_{\gamma }^{k+1}v_{n}-T_{\gamma }^{k+1}v_{i}\Vert
-\varepsilon \\
& >1-\varepsilon -\Vert x_{n}-T_{\gamma }^{k+1}v_{n}\Vert -\varepsilon
>t-(k+3)\varepsilon .
\end{align*}%
Finally, by (\ref{fact3}) and (\ref{fact6}),%
\begin{equation*}
\left\Vert T_{\gamma }^{k+1}v_{n}-x_{n}\right\Vert >1-\varepsilon
-\liminf_{m}\Vert T_{\gamma }^{k+1}v_{n}-T_{\gamma }^{k+1}v_{m}\Vert \geq
1-t-\varepsilon ,
\end{equation*}%
and the proof is complete.
\end{proof}

We can now prove a counterpart of \cite[Lemma 2]{Do3} (see also \cite[%
Theorem 1]{JiLl}).

\begin{lemma}
\label{zn} Let $K$ be a convex weakly compact subset of a Banach space $X.$
Suppose that a mapping $T:K\rightarrow K$ satisfies condition $(C_{\lambda
}) $ for some $\lambda \in (0,1)$ and $(x_{n})$ is a weakly null,
approximate fixed point sequence for $T$ such that%
\begin{equation}
r=\lim_{n\rightarrow \infty }\Vert x_{n}-x\Vert =\inf \{r(K,(y_{n})):(y_{n})%
\text{ is an afps in }K\}>0\text{ }  \label{lim=r}
\end{equation}%
for every $x\in K.$ Then, for every $\varepsilon >0$ and $t\in (\frac{2}{3}%
,1),$ there exists a subsequence of $(x_{n}),$ denoted again $(x_{n}),$ and
a sequence $(z_{n})$ in $K$ such that

\begin{enumerate}
\item[(i)] $(z_{n})$ is weakly convergent,

\item[(ii)] $\Vert z_{n}\Vert >r(1-\varepsilon ),$

\item[(iii)] $\Vert z_{n}-z_{m}\Vert \leq rt,$

\item[(iv)] $\Vert z_{n}-x_{n}\Vert <r(1-t+\varepsilon )$\newline
for every $m,n\in \mathbb{N}.$
\end{enumerate}
\end{lemma}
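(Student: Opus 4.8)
The plan is to take $z_{n}:=T_{\gamma }^{N}(t_{0}x_{n})$, after passing to a subsequence of $(x_{n})$, for a conveniently chosen fixed $t_{0}\in (\frac{2}{3},t)$ with $t-t_{0}$ small and a conveniently chosen positive integer $N$; throughout, $\gamma \in [\lambda ,1)$ is fixed and $T_{\gamma }=(1-\gamma )I+\gamma T$, which maps $K$ into $K$ and has the same fixed points as $T$, so that a sequence is an afps for $T_{\gamma }$ if and only if it is one for $T$. Conditions (i), (iii), (iv) of the statement will be read off from Lemma \ref{3.3} applied to a rescaled copy of $K$, whereas the lower bound (ii), which is not of metric character, needs a separate argument built on Theorem \ref{2.1}; this last point is the main obstacle.

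First I would record the consequences of (\ref{lim=r}). Since $K$ is weakly closed and $(x_{n})$ is weakly null, $0\in K$, hence $\lim_{n}\Vert x_{n}\Vert =r$, $\lim_{n}\Vert x_{n}-x_{m}\Vert =r$ for every fixed $m$, and $\lim_{n}\Vert x_{n}-z\Vert =r$ for every $z\in K$; moreover, by weak lower semicontinuity of the norm, $\Vert z\Vert =\Vert z-0\Vert \leq \liminf_{n}\Vert x_{n}-z\Vert =r$ for every $z\in K$. In particular $t_{0}x_{n}\in K$ (convexity and $0\in K$), so $T_{\gamma }^{k}(t_{0}x_{n})\in K$ and $\Vert T_{\gamma }^{k}(t_{0}x_{n})\Vert \leq r$ for all $k,n$.

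The crucial step is the claim that for every $\varepsilon _{0}>0$ there is $N\in \mathbb{N}$ with $\limsup_{n}\Vert T_{\gamma }^{N}(t_{0}x_{n})\Vert >r-\varepsilon _{0}$. I would prove this by contradiction. If $\limsup_{n}\Vert T_{\gamma }^{k}(t_{0}x_{n})\Vert \leq r-\varepsilon _{0}$ for every $k$, then, by Theorem \ref{2.1} applied to $T$ on the bounded convex set $K$, one can pick strictly increasing $K_{j}$ so that $\Vert T_{\gamma }^{k+1}w-T_{\gamma }^{k}w\Vert <1/j$ for all $k\geq K_{j}$ and $w\in K$; hence $\Vert T(T_{\gamma }^{k}w)-T_{\gamma }^{k}w\Vert =\frac{1}{\gamma }\Vert T_{\gamma }^{k+1}w-T_{\gamma }^{k}w\Vert <1/(j\gamma )$ for such $k$. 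Choosing $n_{j}$ with $\Vert T_{\gamma }^{K_{j}}(t_{0}x_{n_{j}})\Vert <r-\varepsilon _{0}/2$ and putting $w_{j}:=T_{\gamma }^{K_{j}}(t_{0}x_{n_{j}})\in K$, we obtain an afps $(w_{j})$ in $K$, whence $r\leq r(K,(w_{j}))\leq \limsup_{j}\Vert w_{j}-0\Vert =\limsup_{j}\Vert w_{j}\Vert \leq r-\varepsilon _{0}/2$, which is absurd. Thus condition (ii) is obtained by manufacturing, from the orbits of $T_{\gamma }$, a genuine approximate fixed point sequence whose asymptotic radius relative to $K$ --- and in particular whose asymptotic distance to $0\in K$ --- is at least $r$.

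Finally I would fix the parameters and extract the subsequence. Choose $t_{0}$ with $\max (\frac{2}{3},t-\varepsilon )<t_{0}<t$, set $\varepsilon _{0}:=r\varepsilon /2$, and let $N$ be as in the claim. Pick $\varepsilon ^{\prime \prime }>0$ small and set $\varepsilon ^{\prime }:=\varepsilon ^{\prime \prime }/(2N)$, so that $0<\varepsilon ^{\prime }<\frac{1}{10N}$ and $\frac{2}{3}+2N\varepsilon ^{\prime }<t_{0}<1-2\varepsilon ^{\prime }$; then pick $\sigma >0$ small with $\sigma \leq r(t-t_{0})/t_{0}$ and $\sigma <r\varepsilon ^{\prime }/2$. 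Passing to successive subsequences of $(x_{n})$ (each a further subsequence of the preceding) I may assume: $\Vert T_{\gamma }^{N}(t_{0}x_{n})\Vert >r-\varepsilon _{0}$ for all $n$ (by the claim); $(T_{\gamma }^{k}(t_{0}x_{n}))_{n}$ converges weakly to some $y_{k}\in K$ for $k=1,\dots ,N$ (weak compactness); a diagonal argument making the inequalities of Lemma \ref{3.3}(ii) hold for the rescaled sequence below; and $\Vert Tx_{n}-x_{n}\Vert <\sigma $, $\Vert x_{n}\Vert >r-\sigma $, $\Vert x_{n}-y_{k}\Vert >r-\sigma $, $r-\sigma <\Vert x_{n}-x_{m}\Vert <r+\sigma $ for all $n\neq m$, $k\leq N$ (these quantities all tend to $r$). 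Put $d:=\diam \{x_{n}:n\in \mathbb{N}\}$, so $r\leq d\leq r+\sigma $, and rescale by $1/d$: with $\bar{x}_{n}=x_{n}/d$, $\bar{K}=K/d$, $\bar{T}(\bar{x})=T(d\bar{x})/d$ one has $\bar{T}_{\gamma }^{k}(t_{0}\bar{x}_{n})=\frac{1}{d}T_{\gamma }^{k}(t_{0}x_{n})$, $\diam (\bar{x}_{n})=1$, $\bar{T}$ satisfies condition $(C_{\lambda })$, and --- since $\sigma <r\varepsilon ^{\prime }/2$ and $r\leq d\leq r+\sigma $ --- hypotheses (i)--(iv) of Lemma \ref{3.3} hold for $(\bar{x}_{n})$, $\bar{T}$, $\gamma $, with parameters $\varepsilon ^{\prime }$ and $t_{0}$. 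Applying Lemma \ref{3.3} with $k=N$ and multiplying the upper bounds in (\ref{con1}) and (\ref{con2}) by $d$, the sequence $z_{n}:=T_{\gamma }^{N}(t_{0}x_{n})$ satisfies $\Vert z_{n}-z_{m}\Vert \leq dt_{0}\leq rt$ (by $\sigma \leq r(t-t_{0})/t_{0}$) and $\Vert z_{n}-x_{n}\Vert <d(1-t_{0}+\varepsilon ^{\prime \prime }/2)<r(1-t+\varepsilon )$ (by $t-t_{0}<\varepsilon $ and $\sigma ,\varepsilon ^{\prime \prime }$ small); it converges weakly to $y_{N}$, and $\Vert z_{n}\Vert >r-\varepsilon _{0}=r(1-\varepsilon /2)>r(1-\varepsilon )$. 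Relabelling the subsequence as $(x_{n})$, this yields (i)--(iv) and completes the proof.
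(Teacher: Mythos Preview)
Your proposal is correct and follows essentially the same approach as the paper: set $z_{n}=T_{\gamma}^{N}(t_{0}x_{n})$, read off (i), (iii), (iv) from Lemma~\ref{3.3} after rescaling and passing to subsequences, and obtain (ii) from Theorem~\ref{2.1} together with the minimality of $r$ in (\ref{lim=r}). The only organizational differences are that the paper rescales by $r$ at the outset and, for (ii), first isolates the pointwise statement ``$\Vert Tx-x\Vert<\delta(\varepsilon)\Rightarrow\Vert x\Vert>1-\varepsilon$'' and then chooses $N$ so that $\Vert Tz_{n}-z_{n}\Vert<\delta(\varepsilon)$ for \emph{all} $n$, whereas you argue by contradiction on $\limsup_{n}\Vert T_{\gamma}^{N}(t_{0}x_{n})\Vert$ and extract a further subsequence; both routes encode the same idea.
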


\begin{proof}
Let us first notice that if $S:\frac{1}{\bar{r}}K\rightarrow \frac{1}{\bar{r}%
}K$ is defined by $Sy=\frac{1}{\bar{r}}T(\bar{r}y),$ then%
\begin{equation*}
\Vert Sy-y\Vert =\frac{1}{\bar{r}}\Vert T(\bar{r}y)-\bar{r}y\Vert
\end{equation*}%
and $S$ satisfies condition $(C_{\lambda }).$ It follows that a sequence $%
(x_{n})$ satisfies the assumptions of Lemma \ref{zn} if and only if a
sequence $(\frac{x_{n}}{r})$ satisfies these assumptions with $S$ and $\bar{r%
}=1$, i.e., $(\frac{x_{n}}{r})$ is a weakly null afps for $S:\frac{1}{r}%
K\rightarrow \frac{1}{r}K$ and
\begin{equation*}
1=\lim_{n\rightarrow \infty }\Vert \frac{x_{n}}{r}-y\Vert =\inf \{r(\frac{1}{%
r}K,(z_{n})):(z_{n})\text{ is an afps for }S\text{ in }\frac{1}{r}K\}
\end{equation*}%
for every $y\in \frac{1}{r}K.$

Therefore it suffices to prove the lemma for $r=1.$

We claim that for every $\varepsilon >0$ there exists $\delta (\varepsilon )$
such that if $x\in K$ and $\Vert Tx-x\Vert <\delta (\varepsilon )$ then $%
\Vert x\Vert >1-\varepsilon $. Indeed, otherwise, arguing as in \cite{Do3},
there exists $\varepsilon _{0}$ such that we can find $w_{n}\in K$ with $%
\Vert Tw_{n}-w_{n}\Vert <\frac{1}{n}$ and $\Vert w_{n}\Vert \leq
1-\varepsilon _{0}$ for every $n\in \mathbb{N}$. Then the sequence $(w_{n})$
is an approximate fixed point sequence in $K$, but $\limsup_{n\rightarrow
\infty }\Vert w_{n}\Vert \leq 1-\varepsilon _{0},$ which contradicts our
assumption that $\limsup_{n\rightarrow \infty }\Vert w_{n}\Vert \geq 1.$

Fix $\varepsilon >0,$ $t\in \left( \frac{2}{3},1\right) $ and $\gamma \in
\lbrack \lambda ,1).$ From Theorem \ref{2.1}, there exists $N>1$ such that
\begin{equation}
\Vert T_{\gamma }^{N+1}x-T_{\gamma }^{N}x\Vert <\gamma \delta (\varepsilon )
\label{8}
\end{equation}%
for every $x\in K$. Choose $\eta >0$ so small that $0<\eta <\min \left\{
\frac{1}{3(N+2)},\frac{\varepsilon }{N}\right\} $ and $\frac{2}{3}+N\eta
<t<1-2\eta $. Put $v_{n}=tx_{n}$ and consider sequences $(T_{\gamma
}^{k}v_{n})_{n\in \mathbb{N}}$ for $k=1,...,N$. We can assume, passing to
subsequences, that the double limits%
\begin{equation*}
\lim_{n,m\rightarrow \infty ,n\neq m}\Vert T_{\gamma }^{k}v_{n}-T_{\gamma
}^{k}v_{m}\Vert ,\ k=1,...,N,
\end{equation*}%
exist (see, e.g., \cite[Lemma 2.5]{SiSm}). Then, for sufficiently large $n,m$
$(n\neq m),$%
\begin{equation*}
\Vert T_{\gamma }^{k}v_{n}-T_{\gamma }^{k}v_{m}\Vert >\lim_{n,m\rightarrow
\infty ,n\neq m}\Vert T_{\gamma }^{k}v_{n}-T_{\gamma }^{k}v_{m}\Vert -\frac{%
\eta }{2}
\end{equation*}%
\begin{equation*}
=\limsup_{n\rightarrow \infty }\limsup_{m\rightarrow \infty }\Vert T_{\gamma
}^{k}v_{n}-T_{\gamma }^{k}v_{m}\Vert -\frac{\eta }{2}\geq
\liminf_{i\rightarrow \infty }\Vert T_{\gamma }^{k}v_{n}-T_{\gamma
}^{k}v_{i}\Vert -\eta ,
\end{equation*}%
$k=1,...,N.$ Therefore, applying (\ref{lim=r}) (with $r=1$) and passing to
subsequences again, we can assume that the assumptions $(i)-(iv)$ of Lemma %
\ref{3.3} are satisfied, i.e., $(x_{n})$ is weakly null, $\diam(x_{n})=1,$
and for every $n,m\in \mathbb{N}$ and $k=1,...,N,$

\begin{enumerate}
\item[(i)] $(T_{\gamma }^{k}v_{n})_{n\in \mathbb{N}}$ converges weakly to $%
y_{k}\in C,$

\item[(ii)] $\Vert T_{\gamma }^{k}v_{n}-T_{\gamma }^{k}v_{m}\Vert
>\liminf_{i}\Vert T_{\gamma }^{k}v_{n}-T_{\gamma }^{k}v_{i}\Vert -\eta $,

\item[(iii)] $\min \{\Vert x_{n}\Vert ,\Vert x_{n}-x_{m}\Vert ,\Vert
x_{n}-y_{k}\Vert \}>1-\eta $,

\item[(iv)] $\Vert Tx_{n}-x_{n}\Vert <\eta $.
\end{enumerate}

Denote $z_{n}=T_{\gamma }^{N}v_{n}$. It follows from Lemma \ref{3.3} that
for every $n,m\in \mathbb{N},$ we have
\begin{equation*}
\Vert z_{n}-z_{m}\Vert =\Vert T_{\gamma }^{N}v_{n}-T_{\gamma }^{N}v_{m}\Vert
\leq t,
\end{equation*}%
\begin{equation*}
\Vert z_{n}-x_{n}\Vert =\Vert T_{\gamma }^{N}v_{n}-x_{n}\Vert <1-t+N\eta
<1-t+\varepsilon
\end{equation*}%
and $(z_{n})$ is weakly convergent (to $y_{N}$).

Furthermore, by (\ref{8}),
\begin{equation*}
\Vert Tz_{n}-z_{n}\Vert =\frac{1}{\gamma }\Vert T_{\gamma
}^{N+1}v_{n}-T_{\gamma }^{N}v_{n}\Vert <\delta (\varepsilon )
\end{equation*}%
and consequently, $\Vert z_{n}\Vert >1-\varepsilon ,$ which completes the
proof.
\end{proof}

\section{Fixed point theorems}

Let $X$ be a Banach space without the Schur property. Recall \cite{PrSz}
that
\begin{align*}
d(\varepsilon ,x)& =\inf \{\limsup_{n\rightarrow \infty }\left\Vert
x+\varepsilon y_{n}\right\Vert -\left\Vert x\right\Vert :(y_{n})\text{ is
weakly null in }S_{X}\}, \\
b_{1}(\varepsilon ,x)& =\sup_{(y_{n})\in \mathcal{M}_{X}}\liminf_{n%
\rightarrow \infty }\left\Vert x+\varepsilon y_{n}\right\Vert -\left\Vert
x\right\Vert ,
\end{align*}%
where $\mathcal{M}_{X}$ denotes the set of all weakly null sequences $%
(y_{n}) $ in the unit ball $B_{X}$ such that%
\begin{equation*}
\limsup_{n\rightarrow \infty }\limsup_{m\rightarrow \infty }\left\Vert
y_{n}-y_{m}\right\Vert \leq 1.
\end{equation*}%
Applying tools from previous sections, we are led to the following
strengthening of Theorem 9 from \cite{PrSz}.

\begin{theorem}
\label{PrSzSu}Let $C$ be a nonempty convex weakly compact subset of a Banach
space $X$ without the Schur property. If there exists $\varepsilon \in (0,1)$
such that $b_{1}(1,x)<1-\varepsilon $ or $d(1,x)>\varepsilon $ for every $x$
in the unit sphere $S_{X},$ then every continuous mapping $T:C\rightarrow C$
which satisfies condition $(C_{\lambda })$ for some $\lambda \in (0,1),$ has
a fixed point. The assumption about the continuity of $T$ can be dropped if $%
T$ satisfies condition $(C).$
\end{theorem}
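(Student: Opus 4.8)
I would argue by contradiction. Suppose $T:C\to C$ has no fixed point. By the Kuratowski--Zorn lemma there is a nonempty convex weakly compact $K\subseteq C$ minimal among the $T$-invariant sets, and $K$ is not a singleton (a single point would be fixed). Lemma \ref{GKprim} (if $T$ satisfies condition $(C)$) or Lemma \ref{GKbis} (if $T$ is continuous and satisfies $(C_\lambda)$) then supplies an approximate fixed point sequence $(x_n)$ in $K$ with
\[
\lim_{n\to\infty}\|x_n-x\|=r:=\inf\{r(K,(y_n)):(y_n)\text{ is an afps in }K\}\qquad(x\in K).
\]
Since $K$ is not a singleton, $r>0$ (otherwise $x_n\to x$ in norm for \emph{every} $x\in K$). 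Because conditions $(C)$ and $(C_\lambda)$ are invariant under translations and dilations, I may assume, after translating $K$ so a weakly convergent subsequence of $(x_n)$ tends to $0$ and rescaling, that $0\in K$, $x_n\rightharpoonup0$, $\|x_n\|\to1$, $r=1$, and $\lim_n\|x_n-x\|=1=\inf\{r(K,(y_n))\}$ for every $x\in K$.

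Write $\varepsilon_0\in(0,1)$ for the constant in the hypothesis. Fix $t\in(\tfrac23,1)$ and a small $\eta>0$ (to be chosen at the end in terms of $\varepsilon_0,t$) and apply Lemma \ref{zn} with $r=1$: after passing to a subsequence, still called $(x_n)$, I get $(z_n)\subseteq K$ tending weakly to some $u\in K$ with $\|z_n\|>1-\eta$, $\|z_n-z_m\|\le t$, and $\|z_n-x_n\|<1-t+\eta$. Weak lower semicontinuity of the norm then gives $\|z_n-u\|\le\liminf_m\|z_n-z_m\|\le t$, hence $\|u\|\ge\|z_n\|-\|z_n-u\|>1-t-\eta$; and $z_n-x_n\rightharpoonup u$ gives $\|u\|\le\liminf_n\|z_n-x_n\|\le1-t+\eta$. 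In particular $u\neq0$; set $\hat u:=u/\|u\|\in S_X$, $y_n:=(z_n-u)/t$ and $e_n:=z_n-x_n$, so $z_n=u+ty_n$, $e_n-u=ty_n-x_n$, $e_n\rightharpoonup u$, and $\|y_n\|=\|z_n-u\|/t\le1$. Finally, $\|x_n\|\le t\|y_n\|+\|x_n-ty_n\|\le t+\|e_n-u\|$, so $\|x_n\|\to1$ forces $\liminf_n\|e_n-u\|\ge1-t$.

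Now I would use the dichotomy at $\hat u$. If $b_1(1,\hat u)<1-\varepsilon_0$: here $(y_n)$ is weakly null, lies in $B_X$, and has $\|y_n-y_m\|=\|z_n-z_m\|/t\le1$, so $(y_n)\in\mathcal M_X$; choosing $f_n\in S_{X^*}$ with $f_n(z_n)=\|z_n\|$ and using $z_n=u+ty_n$, $\|u\|\le1-t+\eta$ and $\|y_n\|\le1$, one gets $f_n(y_n)>1-2\eta/t$ and $f_n(\hat u)=f_n(u)/\|u\|>1-2\eta/(1-t)$, whence
\[
\|\hat u+y_n\|\ge f_n(\hat u)+f_n(y_n)>2-\frac{2\eta}{t(1-t)},
\]
so $b_1(1,\hat u)\ge\liminf_n\|\hat u+y_n\|-1\ge1-2\eta/(t(1-t))$, contradicting the assumption once $2\eta/(t(1-t))<\varepsilon_0$. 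If instead $d(1,\hat u)>\varepsilon_0$: pass to a further subsequence so $\ell:=\lim_n\|e_n\|$ and $\rho:=\lim_n\|e_n-u\|$ exist; then $\|u\|\le\ell\le1-t+\eta$ (so $\ell-\|u\|<2\eta$) and $\rho\ge1-t$, hence $\alpha:=\rho/\|u\|\ge(1-t)/(1-t+\eta)\to1$. With $w_n:=(e_n-u)/\|e_n-u\|$ (a weakly null unit sequence), the identity $u+\rho w_n=e_n+(\rho-\|e_n-u\|)w_n$ gives $\limsup_n\|u+\rho w_n\|=\ell$, while $u+\rho w_n=\|u\|(\hat u+\alpha w_n)$ together with the elementary inequality $\|\hat u+\alpha w_n\|-1\ge\alpha(\|\hat u+w_n\|-1)-2(1-\alpha)^+$ and $\limsup_n\|\hat u+w_n\|-1\ge d(1,\hat u)>\varepsilon_0$ gives $\limsup_n\|\hat u+\alpha w_n\|>1+\varepsilon_0/2$ for $\eta$ small. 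Hence $\ell=\|u\|\limsup_n\|\hat u+\alpha w_n\|>\|u\|(1+\varepsilon_0/2)$, so $\ell-\|u\|>\tfrac{\varepsilon_0}{2}(1-t-\eta)$, contradicting $\ell-\|u\|<2\eta$ once $\eta<\varepsilon_0(1-t)/(4+\varepsilon_0)$.

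Choosing, say, $t=\tfrac34$ and then $\eta$ below all the (finitely many, each of the form $c\,\varepsilon_0$) thresholds above makes both branches contradictory, so $T$ has a fixed point; continuity of $T$ enters only in the first paragraph, to call on Lemma \ref{GKbis}, and can be dropped when $T$ satisfies condition $(C)$. The step I expect to be delicate is the $d$-branch: the first branch is almost automatic once one notices $(z_n-u)/t\in\mathcal M_X$, but the second relies on reading Lemma \ref{zn} as forcing $e_n=z_n-x_n\rightharpoonup u$ with $\|e_n\|\to\|u\|$ up to $O(\eta)$ yet $\|e_n-u\|\gtrsim1-t$ — a uniform-Kadec--Klee-type obstruction at $\hat u$ — and on getting the quantifiers right, i.e.\ fixing $t$ away from $\tfrac23$ and $1$ before shrinking $\eta$. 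The $r>0$ step and the translation/dilation normalizations should also be checked, but are routine.
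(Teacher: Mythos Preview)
Your proof is correct and follows essentially the same route as the paper: assume no fixed point, pass to a minimal invariant set, invoke Lemma~\ref{GKprim}/\ref{GKbis} and Lemma~\ref{zn}, and show that the unit vector $\hat u=z/\|z\|$ (the paper's $u$) simultaneously makes $b_1(1,\hat u)$ close to $1$ and $d(1,\hat u)$ close to $0$, contradicting the hypothesis. The only noteworthy technical difference is that in the $d$-branch the paper normalizes $z_n-z-x_n$ by the fixed scalar $4/r$ and then invokes \cite[Lemma~4]{PrSz} to pass to the unit sequence, whereas you normalize by $\|e_n-u\|$ directly and compensate with the convexity inequality $\|\hat u+\alpha w_n\|-1\ge\alpha(\|\hat u+w_n\|-1)-2(1-\alpha)^+$; similarly, in the $b_1$-branch you use norming functionals rather than the paper's chain of triangle inequalities, and you normalize $r=1$ at the outset while the paper carries $r$---all of these are cosmetic differences, not a different strategy.
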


\begin{proof}
Assume that there exist a nonempty weakly compact convex set $C\subset X$
and a mapping $T:C\rightarrow C$ satisfying condition $(C)$ or, a continuous
mapping $T:C\rightarrow C$ satisfying condition $(C_{\lambda })$ for some $%
\lambda $, without a fixed point. Then, there exists a nonempty weakly
compact convex minimal and $T$-invariant subset $K\subset C$ with $\diam %
K>0. $ By Lemma \ref{GKprim} if $T$ satisfies condition $(C)$ or, by Lemma %
\ref{GKbis} in the other case, there exists an approximate fixed point
sequence $(x_{n})$ for $T$ in $K$ such that
\begin{equation*}
r=\lim_{n\rightarrow \infty }\Vert x_{n}-x\Vert =\inf \{r(K,(y_{n})):(y_{n})%
\text{ is an afps in }K\}>0\text{ }
\end{equation*}%
for every $x\in K.$ There is no loss of generality in assuming that $(x_{n})$
converges weakly to $0\in K$. Let $\varepsilon >0$ and $t=\frac{3}{4}.$
Lemma \ref{zn} yields a subsequence of $(x_{n})$, denoted again $(x_{n})$,
and a sequence $(z_{n})$ in $K$ such that

\begin{enumerate}
\item[(i)] $(z_{n})$ is weakly convergent to a point $z\in K$,
\end{enumerate}

and for every $n,m\in \mathbb{N}$

\begin{enumerate}
\item[(ii)] $\Vert z_{n}\Vert >r(1-\varepsilon )$,

\item[(iii)] $\Vert z_{n}-z_{m}\Vert \leq \frac{3}{4}r$,

\item[(iv)] $\Vert z_{n}-x_{n}\Vert <r(\frac{1}{4}+\varepsilon )$.
\end{enumerate}

Then%
\begin{equation*}
\liminf_{n\rightarrow \infty }\Vert z_{n}\Vert \geq r(1-\varepsilon ),
\end{equation*}%
\begin{equation*}
\limsup_{n\rightarrow \infty }\Vert z_{n}-z\Vert \leq \limsup_{n\rightarrow
\infty }\limsup_{m\rightarrow \infty }\Vert z_{n}-z_{m}\Vert \leq \frac{3}{4}%
r
\end{equation*}%
and%
\begin{equation}
r(\frac{1}{4}-\varepsilon )\leq \limsup_{n\rightarrow \infty }\Vert
z_{n}\Vert -\limsup_{n\rightarrow \infty }\Vert z_{n}-z\Vert \leq \Vert
z\Vert \leq \liminf_{n\rightarrow \infty }\Vert z_{n}-x_{n}\Vert \leq r(%
\frac{1}{4}+\varepsilon ).  \label{4_1}
\end{equation}%
Now we largely follow \cite[Theorem 9]{PrSz}. Let $u=\frac{z}{\Vert z\Vert }$
and $u_{n}=\frac{4}{3r}(z_{n}-z)$ for every $n$. Then $u\in S_{X}$, $(u_{n})$
is weakly null and
\begin{equation*}
\limsup_{n\rightarrow \infty }\limsup_{m\rightarrow \infty }\Vert
u_{n}-u_{m}\Vert =\frac{4}{3r}\limsup_{n\rightarrow \infty
}\limsup_{m\rightarrow \infty }\Vert z_{n}-z_{m}\Vert \leq 1.
\end{equation*}%
We may assume, passing to a subsequence, that $\lim_{n\rightarrow \infty
}\Vert u_{n}+u\Vert $ exists. Notice that
\begin{align*}
\Vert u_{n}+u\Vert & \geq \left\Vert \frac{4}{3r}(z_{n}-z)+\frac{4}{r}%
z\right\Vert -\left\Vert \frac{4}{r}z-\frac{z}{\Vert z\Vert }\right\Vert \\
& =\frac{4}{r}\left\Vert \frac{1}{3}z_{n}+\frac{2}{3}z\right\Vert
-\left\Vert \frac{4}{r}\left\Vert z\right\Vert -1\right\Vert ,
\end{align*}%
\begin{equation*}
\left\Vert \frac{1}{3}z_{n}+\frac{2}{3}z\right\Vert \geq \left\Vert
z_{n}\right\Vert -\frac{2}{3}\left\Vert z_{n}-z\right\Vert
\end{equation*}%
and%
\begin{equation*}
\left\Vert \frac{4}{r}\left\Vert z\right\Vert -1\right\Vert \leq
4\varepsilon .
\end{equation*}%
Hence%
\begin{equation*}
\lim_{n\rightarrow \infty }\Vert u_{n}+u\Vert \geq \frac{4}{r}\left(
r(1-\varepsilon )-\frac{2}{3}\frac{3}{4}r\right) -4\varepsilon
=2-8\varepsilon .
\end{equation*}%
It follows that $b_{1}(1,u)\geq 1-8\varepsilon .$

Now consider the weakly null sequence $y_{n}=\frac{4}{r}(z_{n}-z-x_{n}).$
Since
\begin{equation*}
\liminf_{n\rightarrow \infty }\Vert y_{n}\Vert \geq \frac{4}{r}%
(\lim_{n\rightarrow \infty }\Vert x_{n}\Vert -\limsup_{n\rightarrow \infty
}\Vert z_{n}-z\Vert )\geq 1,
\end{equation*}%
we have
\begin{align*}
\limsup_{n\rightarrow \infty }\Vert y_{n}+u\Vert & \leq
\limsup_{n\rightarrow \infty }\left\Vert y_{n}+\frac{4}{r}z\right\Vert
+\left\Vert \frac{z}{\Vert z\Vert }-\frac{4}{r}z\right\Vert \\
& \leq \frac{4}{r}r(\frac{1}{4}+\varepsilon )+4\varepsilon =1+8\varepsilon .
\end{align*}%
From \cite[Lemma 4]{PrSz} we conclude that also%
\begin{equation*}
\limsup_{n\rightarrow \infty }\Vert \frac{y_{n}}{\Vert y_{n}\Vert }+u\Vert
\leq \limsup_{n\rightarrow \infty }\Vert y_{n}+u\Vert \leq 1+8\varepsilon .
\end{equation*}%
Consequently, $d(1,u)\leq 8\varepsilon $ which contradicts our assumption.
\end{proof}

Theorem \ref{PrSzSu} is our main theorem which has several consequences. In
\cite{PrSz}, the notion of nearly uniformly nonreasy spaces (NUNC, for
short) was introduced. Recall that a Banach space $X$ is NUNC if it has the
Schur property or, for every $\varepsilon >0$ there is $t>0$ such that
\begin{equation*}
d(\varepsilon ,x)\geq t\ \ or\ \ b(t,x)\leq \varepsilon t\ \ \ \text{for
every }x\in S_{X},
\end{equation*}%
where%
\begin{equation*}
b(\varepsilon ,x)=\sup \{\liminf_{n\rightarrow \infty }\left\Vert
x+\varepsilon y_{n}\right\Vert -\left\Vert x\right\Vert :(y_{n})\text{ is
weakly null in }S_{X}\}.
\end{equation*}%
Corollary 7 in \cite{PrSz} shows that all uniformly noncreasy spaces,
introduced earlier by Prus, are NUNC.

\begin{theorem}
\label{NUNC}Let $C$ be a nonempty convex weakly compact subset of a nearly
uniformly noncreasy Banach space $X$. Then every continuous mapping $%
T:C\rightarrow C$ which satisfies condition $(C_{\lambda })$ for some $%
\lambda \in (0,1),$ has a fixed point. The assumption about the continuity
of $T$ can be dropped if $T$ satisfies condition $(C).$
\end{theorem}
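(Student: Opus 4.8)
The plan is to derive Theorem~\ref{NUNC} as a direct corollary of Theorem~\ref{PrSzSu}, so the only real task is to verify that the hypothesis of Theorem~\ref{PrSzSu} is satisfied by any NUNC space $X$ for which $C$ admits no fixed point. First I would dispose of the trivial case: if $X$ has the Schur property, then $C$, being weakly compact, is norm compact, and a standard argument (for instance, approximating $T$ by contractions $T_n=(1-\tfrac1n)T$ composed with a retraction, or more directly using that a mapping with condition $(C_\lambda)$ on a compact convex set has an afps whose limit point is fixed, invoking continuity for the $(C_\lambda)$ case and Lemma~\ref{GKprim} for the $(C)$ case) yields a fixed point. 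So we may assume $X$ lacks the Schur property, which puts us in the regime where Theorem~\ref{PrSzSu} and the NUNC definition both apply.

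Next I would unwind the definitions. The NUNC condition says: for every $\varepsilon>0$ there is $t>0$ such that for every $x\in S_X$ either $d(\varepsilon,x)\ge t$ or $b(t,x)\le \varepsilon t$. I want to produce a single $\varepsilon_0\in(0,1)$ with the property that for \emph{every} $x\in S_X$, either $b_1(1,x)<1-\varepsilon_0$ or $d(1,x)>\varepsilon_0$. The key observations are the scaling/monotonicity behaviour of $d$ and $b$ in the parameter $\varepsilon$: since $\Vert x+\varepsilon y_n\Vert$ is convex in $\varepsilon$ and equals $\Vert x\Vert$ at $\varepsilon=0$, both $d(\varepsilon,x)/\varepsilon$ and $b(\varepsilon,x)/\varepsilon$ are nondecreasing in $\varepsilon$ on $(0,1]$; hence from control of $d(\varepsilon,x)$ and $b(t,x)$ at small parameters one gets control of $d(1,x)$ from below and $b(1,x)$ from above. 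The remaining point is to compare $b_1(1,x)$ with $b(1,x)$: by definition $b(1,x)$ is the supremum over weakly null sequences in $S_X$ while $b_1(1,x)$ is the supremum over the possibly larger class $\mathcal M_X$ of weakly null sequences in $B_X$ with $\limsup\limsup\Vert y_n-y_m\Vert\le 1$, so a priori $b_1\ge b$; one shows, however, that the two are comparable up to a controlled loss — this is exactly the kind of estimate carried out in~\cite{PrSz} (indeed \cite[Lemma 4]{PrSz} and the surrounding material relate $b_1$ to $b$), and I would quote it rather than reprove it. Combining, choosing $\varepsilon$ in the NUNC definition small enough and letting $t=t(\varepsilon)$ be the resulting threshold, one extracts an $\varepsilon_0$ that works uniformly over $S_X$.

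With that dichotomy in hand, Theorem~\ref{PrSzSu} applies verbatim: for this $\varepsilon_0\in(0,1)$ we have $b_1(1,x)<1-\varepsilon_0$ or $d(1,x)>\varepsilon_0$ for every $x\in S_X$, hence every continuous $T:C\to C$ satisfying $(C_\lambda)$ has a fixed point, and the continuity hypothesis is unnecessary when $T$ satisfies $(C)$. That completes the argument.

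I expect the main obstacle to be the bookkeeping in the previous paragraph: matching the NUNC parameters $(\varepsilon,t)$ to the single absolute constant $\varepsilon_0$ needed by Theorem~\ref{PrSzSu}, and in particular pinning down the precise inequality relating $b_1(1,\cdot)$ to $b(1,\cdot)$ (and $d(1,\cdot)$ to $d(\varepsilon,\cdot)$) with explicit constants, so that one genuinely lands in the open conditions $b_1<1-\varepsilon_0$ or $d>\varepsilon_0$ rather than their non-strict versions. Everything else — the Schur case and the invocation of Theorem~\ref{PrSzSu} — is routine.
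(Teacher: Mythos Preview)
Your approach is the same as the paper's: handle the Schur case directly and, in the non-Schur case, reduce to Theorem~\ref{PrSzSu}. The paper's own proof is extremely short --- for the Schur case with condition~$(C)$ it simply cites \cite[Theorem~2 or Theorem~4]{Su1}, and for the non-Schur case it says ``argue as in the proof of \cite[Corollary~11]{PrSz}'' rather than unwinding the parameter-matching between NUNC and the $b_1$/$d$ dichotomy that you sketch. (Your Schur argument via Lemma~\ref{GKprim} also works: in a compact minimal invariant set the afps from that lemma has a norm-convergent subsequence, forcing $r=0$ and hence $K$ is a singleton.)

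One caution on your sketch of the non-Schur reduction: the monotonicity of $b(\varepsilon,x)/\varepsilon$ points the wrong way for what you want. Convexity gives $b(t,x)/t \le b(1,x)$ for $t<1$, which is a \emph{lower} bound on $b(1,x)$, not an upper one. To pass from $b(t,x)\le \varepsilon t$ to an upper bound on $b(1,x)$ you should instead use the Lipschitz-type estimate $\|x+y_n\|\le\|x+ty_n\|+(1-t)$, which yields $b(1,x)\le b(t,x)+(1-t)\le 1-t(1-\varepsilon)<1$. The analogous step for $d$ does work as you indicate: convexity gives $d(\varepsilon,x)\le \varepsilon\, d(1,x)$, hence $d(\varepsilon,x)\ge t$ implies $d(1,x)\ge t/\varepsilon$. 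With this correction (and the $b_1$--$b$ comparison that you rightly defer to \cite{PrSz}), your outline matches the argument the paper invokes.
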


\begin{proof}
If $X$ has the Schur property, then every weakly compact subset of $X$ is
compact in norm. Therefore every continuous mapping $T:C\rightarrow C$ which
satisfies condition $(C_{\lambda })$ for some $\lambda \in (0,1),$ has a
fixed point. Furthermore, if $T$ satisfies condition $(C),$ the continuity
assumption can be dropped by \cite[Theorem 2]{Su1} or \cite[Theorem 4]{Su1}.

If $X$ does not have the Schur property, we can argue as in the proof of
\cite[Corollary 11]{PrSz}.
\end{proof}

\begin{remark}
Notice that Example 6 in \cite{GLS} shows that the assumption about the
continuity of $T$ is necessary for $\lambda >\frac{3}{4}$. The situation is
unclear for $\lambda \in (\frac{1}{2},\frac{3}{4}].$
\end{remark}

Now we will study spaces with $M(X)>1.$ Recall that, for a given $a\geq 0,$%
\begin{equation*}
R(a,X)=\sup \{\liminf_{n\rightarrow \infty }\left\Vert y_{n}+x\right\Vert \},
\end{equation*}%
where the supremum is taken over all $x\in X$ with $\left\Vert x\right\Vert
\leq a$ and all weakly null sequences in the unit ball $B_{X}$ such that
\begin{equation*}
D[(y_{n})]=\limsup_{n\rightarrow \infty }\limsup_{m\rightarrow \infty
}\left\Vert y_{n}-y_{m}\right\Vert \leq 1.
\end{equation*}%
Notice that in our notation,%
\begin{equation}
R(a,X)=\sup_{\left\Vert x\right\Vert \leq a}(b_{1}(1,x)+\left\Vert
x\right\Vert ).  \label{4_3}
\end{equation}%
The modulus $R(\cdot ,X)$ was defined by Dom\'{\i}nguez Benavides in \cite%
{Do1} as a generalization of the coefficient $R(X)$ introduced by Garc\'{\i}%
a Falset \cite{Ga1}. He also defined the coefficient%
\begin{equation*}
M(X)=\sup \left\{ \frac{1+a}{R(a,X)}:a\geq 0\right\}
\end{equation*}%
and proved that the condition $M(X)>1$ implies that $X$ has the weak fixed
point property for nonexpansive mappings. We generalize this result to
mappings which satisfy condition $(C_{\lambda })$.

The following lemma is an analogue (with a minor correction) of \cite[%
Corollary 4.3 (a), (b), (c)]{GLM}.

\begin{lemma}
\label{4.1} Let $X$ be a Banach space. The following conditions are
equivalent:

\begin{enumerate}
\item[(a)] $M(X)>1,$\smallskip

\item[(b)] there exists $a>0$ such that $R(a,X)<1+a,$\smallskip

\item[(c)] for every $a>0,$ $R(a,X)<1+a.$
\end{enumerate}
\end{lemma}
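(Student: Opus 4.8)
The plan is to reduce Lemma~\ref{4.1} to two structural properties of the function $a\mapsto R(a,X)$: that it is convex and $1$-Lipschitz on $[0,\infty)$. Throughout I would work with $F(x):=b_{1}(1,x)+\left\Vert x\right\Vert =\sup_{(y_{n})\in\mathcal{M}_{X}}\liminf_{n}\left\Vert x+y_{n}\right\Vert$, so that by (\ref{4_3}) one has $R(a,X)=\sup_{\left\Vert x\right\Vert \le a}F(x)=\sup_{\left\Vert u\right\Vert \le 1}F(au)$. First I would record the elementary consequences: taking the zero sequence gives $F(x)\ge\left\Vert x\right\Vert$, and the triangle inequality together with $\liminf_{n}\left\Vert y_{n}\right\Vert \le 1$ gives $F(x)\le\left\Vert x\right\Vert +1$; hence $a\le R(a,X)\le 1+a$ for every $a\ge 0$. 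In particular $R(\cdot,X)$ is nondecreasing, $R(0,X)\le 1$, and the ratio $\tfrac{1+a}{R(a,X)}$ is $\ge 1$ at $a=0$, so that $M(X)>1$ holds if and only if $R(a,X)<1+a$ for \emph{some} $a\ge 0$.

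The crux is convexity of $R(\cdot,X)$, which I would derive from convexity of $F$ on $X$. The map $F$ is $1$-Lipschitz since $\bigl|\left\Vert x+y_{n}\right\Vert -\left\Vert x'+y_{n}\right\Vert \bigr|\le \left\Vert x-x'\right\Vert$. For midpoint convexity, fix $x,x'\in X$ and $(y_{n})\in\mathcal{M}_{X}$; as the real sequences $(\left\Vert x+y_{n}\right\Vert)_{n}$ and $(\left\Vert x'+y_{n}\right\Vert)_{n}$ are bounded, pass to a subsequence $(y_{n_{k}})$ along which both converge. Since $\mathcal{M}_{X}$ is stable under passing to subsequences, $(y_{n_{k}})\in\mathcal{M}_{X}$, so $\lim_{k}\left\Vert x+y_{n_{k}}\right\Vert \le F(x)$ and $\lim_{k}\left\Vert x'+y_{n_{k}}\right\Vert \le F(x')$; combining this with $\liminf_{n}\left\Vert \tfrac{x+x'}{2}+y_{n}\right\Vert \le \lim_{k}\left\Vert \tfrac{x+x'}{2}+y_{n_{k}}\right\Vert$ and the convexity of the norm and then taking the supremum over $(y_{n})$ gives $F(\tfrac{x+x'}{2})\le\tfrac12\bigl(F(x)+F(x')\bigr)$. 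A continuous midpoint-convex function is convex, so $F$ is convex; therefore each $a\mapsto F(au)$ with $\left\Vert u\right\Vert \le 1$ is convex, and $R(\cdot,X)=\sup_{\left\Vert u\right\Vert \le 1}F(\cdot\,u)$ is convex as a supremum of convex functions. I would also note the one-sided estimate $R(a,X)\le R(a_{0},X)+(a-a_{0})$ for $a\ge a_{0}>0$: for $\left\Vert x\right\Vert \le a$ the vector $x'':=\tfrac{a_{0}}{a}x$ has $\left\Vert x''\right\Vert \le a_{0}$ and $\left\Vert x-x''\right\Vert \le a-a_{0}$, so $F(x)\le F(x'')+(a-a_{0})\le R(a_{0},X)+(a-a_{0})$.

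With these in hand the equivalences are immediate. (c)$\Rightarrow$(b) is trivial; (b)$\Rightarrow$(a) holds since $M(X)\ge\tfrac{1+a_{0}}{R(a_{0},X)}>1$ whenever $R(a_{0},X)<1+a_{0}$ for some $a_{0}>0$; and (a)$\Rightarrow$(b) follows because, by the first paragraph, $M(X)>1$ gives some $a\ge 0$ with $R(a,X)<1+a$, and if $a=0$ then $R(0,X)<1$ forces $R(a,X)\le R(0,X)+a<1+a$ for all $a>0$ by the $1$-Lipschitz bound. For (b)$\Rightarrow$(c), assume $R(a_{0},X)<1+a_{0}$ with $a_{0}>0$ and fix $a>0$: if $a\ge a_{0}$ the one-sided estimate gives $R(a,X)\le R(a_{0},X)+(a-a_{0})<1+a$; if $0<a<a_{0}$, writing $a=(1-\tfrac{a}{a_{0}})\cdot 0+\tfrac{a}{a_{0}}\cdot a_{0}$ and using convexity together with $R(0,X)\le 1$ yields
\[
R(a,X)\le \bigl(1-\tfrac{a}{a_{0}}\bigr)R(0,X)+\tfrac{a}{a_{0}}R(a_{0},X)<\bigl(1-\tfrac{a}{a_{0}}\bigr)+\tfrac{a}{a_{0}}(1+a_{0})=1+a,
\]
the strictness coming from $R(a_{0},X)<1+a_{0}$ and $\tfrac{a}{a_{0}}>0$.

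I expect the delicate step to be the midpoint convexity of $F$: the naive estimate of $\liminf_{n}\left\Vert \tfrac{x+x'}{2}+y_{n}\right\Vert$ produces a term $\limsup_{n}\left\Vert x'+y_{n}\right\Vert$ that is not controlled by $F(x')$, since $F$ is built from a $\liminf$; the remedy is to extract a common convergent subsequence first and to exploit that $\mathcal{M}_{X}$ is closed under passage to subsequences \emph{before} the supremum is taken. This, together with the careful handling of the boundary value $a=0$, is presumably where the ``minor correction'' to \cite{GLM} resides.
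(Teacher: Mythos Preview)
Your proof is correct and takes a genuinely different route from the paper. For the crucial implication (b)$\Rightarrow$(c) the paper argues by contrapositive via duality: assuming $R(a,X)=1+a$ for some $a>0$, it selects near-extremal $x$ and $(x_n)$, picks norming functionals $f_n\in S_{X^{\ast}}$, extracts a $w^{\ast}$-convergent \emph{subnet} of $(f_n)$ (using Banach--Alaoglu), and deduces $\liminf_n\Vert x_n+\tfrac{b}{a}x\Vert\ge(1+b)(1-\eta)$ for every $b>0$ and $\eta\in(0,1)$. You instead isolate the structural facts that $a\mapsto R(a,X)$ is convex and $1$-Lipschitz on $[0,\infty)$, after which (b)$\Rightarrow$(c) is a two-line convexity computation. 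Your approach is more elementary---no dual space, no nets---and produces a reusable property of the modulus $R(\cdot,X)$; the paper's approach stays closer to the original argument of \cite{GLM}. Two small remarks: in your midpoint-convexity step the limit $\lim_k\Vert\tfrac{x+x'}{2}+y_{n_k}\Vert$ need not exist, but replacing it by $\limsup_k$ (or passing to one further subsequence) repairs this immediately; and your guess about the ``minor correction'' is misplaced---since the paper's own argument carefully uses a $w^{\ast}$-convergent subnet rather than a subsequence (as $B_{X^{\ast}}$ need not be $w^{\ast}$-sequentially compact in general), the correction to \cite{GLM} lies there, not in any convexity subtlety.
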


\begin{proof}
First prove that $(a)\Rightarrow (b)$. Assume that $M(X)>1.$ Then there
exists $a\geq 0$ with $R(a,X)<1+a.$ If it occurs that $a=0$ then $R(b,X)\leq
R(0,X)+b<1+b$ for each $b\geq 0.$

The proof of $(b)\Rightarrow (c)$ follows the arguments from \cite{GLM}. We
will show that if $R(a,X)=1+a$ for some $a>0$, then $R(b,X)=1+b$ for all $%
b>0 $. Let us then suppose that $R(a,X)=1+a$ for some $a>0$ and consider
another number $b>0$. Fix $\eta \in (0,1).$ Since%
\begin{equation*}
R(a,X)=1+a>1+a-\eta \min \{1,a\},
\end{equation*}%
there exist $x\in X$ with $\Vert x\Vert \leq a$ and a weakly null sequence $%
(x_{n})$ in $B_{X}$ such that $\limsup_{n\rightarrow \infty
}\limsup_{m\rightarrow \infty }\Vert x_{n}-x_{m}\Vert \leq 1$ and
\begin{equation*}
\liminf_{n\rightarrow \infty }\Vert x_{n}+x\Vert >1+a-\eta \min \{1,a\}.
\end{equation*}%
For each $n\in \mathbb{N},$ choose a functional $f_{n}\in S_{X^{\ast }}$
with
\begin{equation*}
f_{n}(x_{n}+x)=\Vert x_{n}+x\Vert .
\end{equation*}%
We can assume, passing to a subsequence, that $\lim_{n\rightarrow \infty
}f_{n}(x_{n})$ exists. Since $B_{X^{\ast }}$ is $\mathit{{w}^{\ast }}$%
-compact, there exist a directed set $(\mathcal{A},\preceq )$ and a subnet $%
(f_{n_{\alpha }})_{\alpha \in \mathcal{A}}$ of $(f_{n})$ which is $\mathit{{w%
}^{\ast }}$-convergent to some $f\in B_{X^{\ast }}.$ Then%
\begin{equation*}
\lim_{\alpha }f_{n_{\alpha }}(x_{n_{\alpha }}+y)=\lim_{\alpha }f_{n_{\alpha
}}(x_{n_{\alpha }})+\lim_{\alpha }f_{n_{\alpha }}(y)=\lim_{\alpha
}f_{n_{\alpha }}(x_{n_{\alpha }})+f(y)
\end{equation*}%
for every $y\in X.$

For a fixed $\varepsilon >0$ find $n_{0}\in \mathbb{N}$ such that
\begin{equation*}
\Vert x_{n}+x\Vert >\liminf_{n\rightarrow \infty }\Vert x_{n}+x\Vert
-\varepsilon
\end{equation*}%
for every $n\geq n_{0}$. Then there exists $\alpha \in \mathcal{A}$ such
that $n_{\beta }\geq n_{0}$ for every $\beta \succeq \alpha $ and
consequently, since $\varepsilon >0$ is arbitrary,
\begin{equation*}
\liminf_{\alpha }\Vert x_{n_{\alpha }}+x\Vert =\sup_{\alpha \in \mathcal{A}%
}\inf_{\beta \succeq \alpha }\Vert x_{n_{\alpha }}+x\Vert \geq
\liminf_{n\rightarrow \infty }\Vert x_{n}+x\Vert .
\end{equation*}%
Thus%
\begin{align*}
1+a-\eta \min \{1,a\}& <\liminf_{n\rightarrow \infty }\Vert x_{n}+x\Vert
\leq \liminf_{\alpha }\Vert x_{n_{\alpha }}+x\Vert \\
& =\lim_{\alpha }f_{n_{\alpha }}(x_{n_{\alpha }}+x)=\lim_{\alpha
}f_{n_{\alpha }}(x_{n_{\alpha }})+f(x).
\end{align*}%
Since for each $n\geq 1,$
\begin{equation*}
f_{n}(x_{n})\leq \Vert x_{n}\Vert \leq 1
\end{equation*}%
and
\begin{equation*}
f(x)\leq \Vert x\Vert \leq a
\end{equation*}%
we get
\begin{equation*}
\lim_{\alpha }f_{n_{\alpha }}(x_{n_{\alpha }})>1-\eta \min \{1,a\}\geq 1-\eta
\end{equation*}%
and
\begin{equation*}
f(x)>a-\eta \min \{1,a\}\geq a(1-\eta ).
\end{equation*}%
Therefore,

\begin{align*}
\liminf_{n\rightarrow \infty }\Vert x_{n}+\frac{b}{a}x\Vert & \geq
\lim_{n\rightarrow \infty }f_{n}(x_{n}+\frac{b}{a}x)=\lim_{\alpha
}f_{n_{\alpha }}(x_{n_{\alpha }}+\frac{b}{a}x) \\
& =\lim_{\alpha }f_{n_{\alpha }}(x_{n_{\alpha }})+\frac{b}{a}f(x)>1-\eta
+b(1-\eta )=(1+b)(1-\eta ).
\end{align*}

Hence $R(b,X)\geq (1+b)(1-\eta )$ and, by the arbitrariness of $\eta >0,$ we
have $R(b,X)\geq 1+b$, which gives $(b)\Rightarrow (c).$

Clearly, $(c)\Rightarrow (a),$ and the lemma follows.
\end{proof}

Theorem \ref{PrSzSu} and Lemma \ref{4.1} give the following corollary.

\begin{theorem}
Let $C$ be a nonempty convex weakly compact subset of a Banach space $X$
with $M(X)>1.$ Then every mapping $T:C\rightarrow C$ which satisfies
condition $(C)$ and every continuous mapping $T:C\rightarrow C$ which
satisfies condition $(C_{\lambda })$ for some $\lambda \in (0,1),$ has a
fixed point.
\end{theorem}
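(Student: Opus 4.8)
The plan is to read the theorem off from Theorem \ref{PrSzSu} and Lemma \ref{4.1}, arguing by cases according to whether or not $X$ has the Schur property (Theorem \ref{PrSzSu} explicitly excludes the Schur case). If $X$ has the Schur property, then every weakly compact subset of $X$ is norm compact, so $C$ is a compact convex set; a continuous self-map of $C$ then has a fixed point by Schauder's fixed point theorem, and if $T$ satisfies condition $(C)$ the continuity hypothesis can be dropped by \cite[Theorem 2]{Su1} or \cite[Theorem 4]{Su1}, exactly as in the proof of Theorem \ref{NUNC}. This case is routine.

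Now suppose $X$ does not have the Schur property, so that Theorem \ref{PrSzSu} is applicable. It then suffices to verify its geometric hypothesis: that there is $\varepsilon \in (0,1)$ with $b_1(1,x) < 1-\varepsilon$ (or $d(1,x) > \varepsilon$) for every $x \in S_X$. Since $M(X) > 1$, Lemma \ref{4.1}(c), applied with $a = 1$, gives $R(1,X) < 2$. By the identity (\ref{4_3}), for every $x \in S_X$ one has $b_1(1,x) + 1 \le R(1,X) < 2$, hence $b_1(1,x) \le R(1,X) - 1 < 1$. Choosing any $\varepsilon \in (0,1)$ with $\varepsilon < 2 - R(1,X)$ (such an $\varepsilon$ exists because $1 \le R(1,X) < 2$) we obtain $b_1(1,x) < 1-\varepsilon$ for every $x \in S_X$, and Theorem \ref{PrSzSu} then yields the fixed point, both for mappings satisfying condition $(C)$ (with no continuity assumption) and for continuous mappings satisfying condition $(C_\lambda)$.

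I do not expect a genuine obstacle: the whole content of the statement is already packaged in Theorem \ref{PrSzSu} and Lemma \ref{4.1}. The only two points to watch are not overlooking the Schur case --- since Theorem \ref{PrSzSu} assumes the Schur property fails --- and making sure the chosen $\varepsilon$ lies in $(0,1)$, which is immediate from $1 \le R(1,X) < 2$.
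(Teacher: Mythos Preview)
Your proposal is correct and follows essentially the same route as the paper's own proof: split into the Schur and non-Schur cases (handling the former exactly as in Theorem \ref{NUNC}), and in the latter use Lemma \ref{4.1}(c) with $a=1$ together with (\ref{4_3}) to obtain the bound $b_1(1,x)\le R(1,X)-1<1$ and then apply Theorem \ref{PrSzSu}. The paper simply sets $\varepsilon = 2-R(1,X)$; your choice of any $\varepsilon < 2-R(1,X)$ is slightly more careful in securing the strict inequality $b_1(1,x)<1-\varepsilon$ required by Theorem \ref{PrSzSu}, but otherwise the arguments are identical.
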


\begin{proof}
If $X$ has the Schur property and $T:C\rightarrow C$ satisfies condition $%
(C),$ the continuity assumption can be dropped by \cite[Theorem 2]{Su1} as
in the proof of Theorem \ref{NUNC}.

Assume now that $X$ does not have the Schur property and set $\varepsilon
=2-R(1,X).$ Then, by Lemma \ref{4.1} (c), $\varepsilon \in (0,1).$ It
suffices to notice that from (\ref{4_3}),
\begin{equation*}
b_{1}(1,x)\leq R(1,X)-1=1-(2-R(1,X))
\end{equation*}%
for every $x\in S_{X}$, and apply Theorem \ref{PrSzSu}.
\end{proof}

Garc\'{\i}a Falset, Llor\'{e}ns Fuster and Mazcu\~{n}an Navarro \cite{GLM}
introduced another modulus, $RW(a,X),$ which plays an important role in
fixed point theory for nonexpansive mappings. Recall that, for a given $%
a\geq 0,$%
\begin{equation*}
RW(a,X)=\sup \min \{\liminf_{n}\left\Vert x_{n}+x\right\Vert
,\liminf_{n}\left\Vert x_{n}-x\right\Vert \},
\end{equation*}%
where the supremum is taken over all $x\in X$ with $\left\Vert x\right\Vert
\leq a$ and all weakly null sequences in the unit ball $B_{X},$ and,
\begin{equation*}
MW(X)=\sup \left\{ \frac{1+a}{RW(a,X)}:a\geq 0\right\} .
\end{equation*}%
It was proved in \cite[Theorem 3.3]{GLM} that if $B_{X^{\ast }}$ is $w^{\ast
}$-sequentially compact, then $M(X)\geq MW(X).$ Since $B_{X^{\ast }}$ is $%
w^{\ast }$-sequentially compact if $X$ is separable, we obtain the following
corollary.

\begin{corollary}
Let $C$ be a nonempty convex weakly compact subset of Banach space $X$ with $%
MW(X)>1.$ Then every mapping $T:C\rightarrow C$ which satisfies condition $%
(C)$ and every continuous mapping $T:C\rightarrow C$ which satisfies
condition $(C_{\lambda })$ for some $\lambda \in (0,1),$ has a fixed point.
\end{corollary}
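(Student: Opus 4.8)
The plan is to follow the two proofs immediately preceding this corollary and reduce the assertion to the separable case, where \cite[Theorem 3.3]{GLM} gives $M(Y)\geq MW(Y)$. First I would dispose of the situation in which $X$ has the Schur property exactly as in Theorem \ref{NUNC}: then every weakly compact subset of $X$ is norm compact, so a continuous self‑map of $C$ has a fixed point, and if $T$ satisfies condition $(C)$ the continuity hypothesis may be dropped by \cite[Theorem 2]{Su1}. So assume henceforth that $X$ does not have the Schur property, and suppose, towards a contradiction, that $T\colon C\to C$ satisfies condition $(C)$, or is continuous and satisfies condition $(C_{\lambda})$ for some $\lambda\in(0,1)$, and has no fixed point.

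The scheme is to produce a \emph{separable} closed subspace $Y\subseteq X$ together with a nonempty convex weakly compact set $D\subseteq C\cap Y$ which is $T$‑invariant and satisfies $\diam D>0$. Granting this, $T|_{D}\colon D\to D$ still satisfies condition $(C)$ (respectively, is continuous and satisfies condition $(C_{\lambda})$) and has no fixed point. Since $Y$ is separable, $B_{Y^{\ast}}$ is $w^{\ast}$‑sequentially compact, so \cite[Theorem 3.3]{GLM} yields $M(Y)\geq MW(Y)$; moreover a weakly null sequence in $B_{Y}$ is weakly null in $B_{X}$ with unchanged norm, whence $RW(a,Y)\leq RW(a,X)$ for every $a\geq 0$ and therefore $MW(Y)\geq MW(X)>1$. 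Thus $M(Y)>1$, and applying the preceding theorem with $X$ replaced by $Y$ and $C$ by $D$ forces $T|_{D}$ to have a fixed point, a contradiction. When $T$ is continuous (the case of condition $(C_{\lambda})$) the construction of $(Y,D)$ is routine: fix distinct $a,b\in C$, set $D_{0}=\overline{\conv}\{a,b\}$ and, inductively, $D_{k+1}=\overline{\conv}\bigl(D_{k}\cup T(D_{k})\bigr)$, and put $D=\overline{\bigcup_{k\geq 0}D_{k}}$. Each $D_{k}$ lies in $C$ and is separable (a continuous image of a separable set is separable, and the closed convex hull of a separable set is separable), hence $D$ is separable; $D$ is convex (an increasing union of convex sets, then closure) and closed, hence weakly compact as a subset of the weakly compact set $C$; $D$ is $T$‑invariant because $T$ is continuous and $D$ is closed; and $\diam D\geq\|a-b\|>0$. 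Taking $Y$ to be the closed linear span of $D$ finishes this case.

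The main obstacle is the case of condition $(C)$, where $T$ need not be continuous in general, so the ``close up under $T$'' construction above may leave the separable setting or fail to produce a $T$‑invariant set. Here I would instead use the Kuratowski-Zorn lemma to pass to a minimal convex weakly compact $T$‑invariant set $K\subseteq C$ with $\diam K>0$, invoke Lemma \ref{GKprim} to obtain an approximate fixed point sequence $(x_{n})$ in $K$ with the Goebel-Karlovitz/minimality property, and then exploit this structure to place the whole problem inside a separable subspace — for instance by showing that the minimal set $K$ is itself separable, or, failing that, by establishing directly that the weak fixed point property for mappings satisfying condition $(C)$ is separably determined. Verifying this separable reduction in the discontinuous case is the only genuinely delicate point; once it is available, the argument of the previous paragraph applies verbatim and yields the corollary.
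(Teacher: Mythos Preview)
Your overall strategy---reduce to a separable closed subspace, use $RW(a,Y)\le RW(a,X)$ to get $MW(Y)\ge MW(X)>1$, then invoke \cite[Theorem~3.3]{GLM} to obtain $M(Y)>1$ and apply the preceding theorem---is precisely the (terse, unwritten) argument the paper intends when it says ``Since $B_{X^{\ast}}$ is $w^{\ast}$-sequentially compact if $X$ is separable, we obtain the following corollary.'' Your handling of the Schur case and of the continuous $(C_\lambda)$ case is correct.

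The difficulty you flag in the condition $(C)$ case is real for the construction you propose: when $T$ is discontinuous, $T(D_k)$ need not be separable, so the iteration $D_{k+1}=\overline{\conv}\bigl(D_k\cup T(D_k)\bigr)$ may leave the separable world, and there is no evident reason why a minimal invariant set should itself be separable. Your proposal therefore has a genuine gap at exactly the point you identify. The way out is not to look for a separable $T$\nobreakdash-invariant set at all, but to localize the \emph{witnesses} produced in the proof of Theorem~\ref{PrSzSu}. Fix $K$ and the afps $(x_n)$ from Lemma~\ref{GKprim}; then for every rational $\varepsilon>0$ the proof of Theorem~\ref{PrSzSu} (with $t=\tfrac34$) produces $u^{(\varepsilon)}\in S_X$ and a weakly null sequence $(u_n^{(\varepsilon)})$ with $\|u_n^{(\varepsilon)}\|\le 1$, $D[(u_n^{(\varepsilon)})]\le 1$, and $\lim_n\|u_n^{(\varepsilon)}+u^{(\varepsilon)}\|\ge 2-8\varepsilon$. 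Let $Y$ be the closed linear span of all these countably many vectors. Then $Y$ is separable and, since each $(u_n^{(\varepsilon)})\in\mathcal{M}_Y$, we obtain $R(1,Y)\ge 2-8\varepsilon$ for every rational $\varepsilon>0$, whence $R(1,Y)=2$ and $M(Y)=1$ by Lemma~\ref{4.1}. On the other hand $MW(Y)\ge MW(X)>1$, and \cite[Theorem~3.3]{GLM} applied in the separable space $Y$ gives $M(Y)\ge MW(Y)>1$, the desired contradiction. This closes the condition $(C)$ case without any $T$\nobreakdash-invariance requirement on the separable subspace.
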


Recall that a Banach space $X$ is uniformly nonsquare if
\begin{equation*}
J(X)=\sup_{x,y\in S_{X}}\min \left\{ \left\Vert x+y\right\Vert ,\left\Vert
x-y\right\Vert \right\} <2.
\end{equation*}%
In \cite{GLM}, a characterization of reflexive Banach spaces with $MW(X)>1$
is given. In particular (see \cite[Corollary 5.1]{GLM}), all uniformly
nonsquare Banach spaces fulfill this condition. Thus we obtain the following
corollary which answers Question 1 in \cite{DIK}.

\begin{corollary}
Let $C$ be a nonempty convex weakly compact subset of a uniformly nonsquare
Banach space. Then every mapping $T:C\rightarrow C$ which satisfies
condition $(C)$ and every continuous mapping $T:C\rightarrow C$ which
satisfies condition $(C_{\lambda })$ for some $\lambda \in (0,1),$ has a
fixed point.
\end{corollary}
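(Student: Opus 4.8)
The plan is to obtain this corollary by chaining it onto the corollary for spaces with $MW(X)>1$ proved just above; the only substantive input needed is that a uniformly nonsquare norm forces $MW(X)>1$, and this is already available in the literature. So the proof will consist of a short citation chain rather than any new construction.

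First I would record that a uniformly nonsquare Banach space is superreflexive, hence reflexive, by James's theorem; reflexivity is what makes the relevant characterization from \cite{GLM} applicable. I would then invoke \cite[Corollary 5.1]{GLM}, which states precisely that $J(X)<2$ implies $MW(X)>1$. Having established $MW(X)>1$, the conclusion is immediate from the preceding corollary: every $T:C\to C$ satisfying condition $(C)$, and every continuous $T:C\to C$ satisfying condition $(C_{\lambda})$ for some $\lambda\in(0,1)$, has a fixed point. For orientation, recall that the preceding corollary itself runs as follows (the present statement simply inherits its structure): one passes, if necessary, to a closed separable $T$-invariant subspace, where $B_{X^{\ast}}$ is $w^{\ast}$-sequentially compact, so that \cite[Theorem 3.3]{GLM} gives $M(X)\ge MW(X)>1$; then Lemma \ref{4.1} and Theorem \ref{PrSzSu} finish the job. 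Nothing beyond the inequality $MW(X)>1$ needs to be verified here.

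There is no genuine mathematical obstacle in this corollary — the work was done in Sections 2--4 and in the $M(X)>1$ theorem, and the only \emph{new} fact being quoted is the estimate from \cite{GLM}. The one point deserving attention is bookkeeping: one must make sure the cited form of \cite[Corollary 5.1]{GLM} yields $MW(X)>1$ for \emph{every} uniformly nonsquare space, with no hidden separability or dual-compactness side condition. Since $J(X)<2$ already delivers (super)reflexivity, and the characterization of $MW(X)>1$ in \cite{GLM} is phrased for reflexive spaces, this causes no trouble; the answer to Question 1 in \cite{DIK} then follows at once from the chain above.
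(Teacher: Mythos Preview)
Your proposal is correct and follows essentially the same route as the paper: the paper likewise derives this corollary directly from the preceding $MW(X)>1$ corollary by citing \cite[Corollary 5.1]{GLM} to obtain $MW(X)>1$ for uniformly nonsquare spaces. Your additional remarks on reflexivity and on the separable reduction behind the $MW(X)>1$ corollary are accurate elaborations but not required beyond what the paper sketches.
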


\begin{remark}
It is not known whether our results are valid for mappings satisfying
property $(L)$ or $(\ast ).$
\end{remark}

\begin{acknowledgement}
The authors thank Mariusz Szczepanik for helpful discussions and drawing
their attention to Theorem 9 in \cite{PrSz}.
\end{acknowledgement}

\bigskip


\begin{thebibliography}{99}
\bibitem{BDT} T. Butsan, S. Dhompongsa, W. Takahashi, A fixed point theorem
for pointwise eventually nonexpansive mappings in nearly uniformly convex
Banach spaces, Nonlinear Anal. 74 (2011), 1694--1701.

\bibitem{DhKa} S. Dhompongsa, A. Kaewcharoen, Fixed point theorems for
nonexpansive mappings and Suzuki-generalized nonexpansive mappings on a
Banach lattice, Nonlinear Anal. 71 (2009), 5344--5353.

\bibitem{DIK} S. Dhompongsa, W. Inthakon, A. Kaewkhao, Edelstein's method
and fixed point theorems for some generalized nonexpansive mappings, J.
Math. Anal. Appl. 350 (2009), 12--17.

\bibitem{Do1} T. Dom\'{\i}nguez Benavides, A geometrical coefficient
implying the fixed point property and stability results, Houston J. Math. 22
(1996), 835--849.

\bibitem{Do3} T. Dom\'{\i}nguez Benavides, A renorming of some nonseparable
Banach spaces with the fixed point property, J. Math. Anal. Appl. 350
(2009), 525--530.

\bibitem{EdOb} M. Edelstein, R. C. O'Brien, Nonexpansive mappings,
asymptotic regularity and successive approximations, J. London Math. Soc.
(2) 17 (1978), 547--554.

\bibitem{EsLoNi} R. Esp\'{\i}nola, P. Lorenzo, A. Nicolae, Fixed points,
selections and common fixed points for nonexpansive-type mappings, J. Math.
Anal. Appl. 382 (2011), 503--515.

\bibitem{Ga1} J. Garc\'{\i}a Falset, Stability and fixed points for
nonexpansive mappings, Houston J. Math. 20 (1994), 495--506.

\bibitem{GLM} J. Garc\'{\i}a Falset, E. Llor\'{e}ns Fuster, E. M. Mazcu\~{n}%
an Navarro, Uniformly nonsquare Banach spaces have the fixed point property
for nonexpansive mappings, J. Funct. Anal. 233 (2006), 494--514.

\bibitem{GLS} J. Garc\'{\i}a Falset, E. Llor\'{e}ns Fuster, T. Suzuki, Fixed
point theory for a class of generalized nonexpansive mappings, J. Math.
Anal. Appl. 375 (2011), 185--195.

\bibitem{Go} K. Goebel, On the structure of minimal invariant sets for
nonexpansive mappings, Ann. Univ. Mariae Curie-Sk{\l }odowska Sect. A 29
(1975), 73--77.

\bibitem{GoKi1} K. Goebel, W. A. Kirk, Iteration processes for nonexpansive
mappings, in: Topological Methods in Nonlinear Functional Analysis, S. P.
Singh, S. Thomeier, B. Watson (eds.), AMS, Providence, R.I., 1983, 115--123.

\bibitem{KiSi} Handbook of Metric Fixed Point Theory, W. A. Kirk, B. Sims
(eds.), Kluwer Academic Publishers, Dordrecht, 2001.

\bibitem{Is} S. Ishikawa, Fixed points and iteration of a nonexpansive
mapping in a Banach space. Proc. Amer. Math. Soc. 59 (1976), no. 1, 65--71.

\bibitem{JiLl} A. Jim\'{e}nez-Melado, E. Llor\'{e}ns Fuster, Opial modulus
and stability of the fixed point property, Nonlinear Anal. 39 (2000),
341--349.

\bibitem{Ka} L. A. Karlovitz, Existence of fixed points of nonexpansive
mappings in a space without normal structure, Pacific J. Math. 66 (1976),
153--159.

\bibitem{LlMo} E. Llor\'{e}ns Fuster, E. Moreno G\'{a}lvez, The fixed point
theory for some generalized nonexpansive mappings, Abstr. Appl. Anal. 2011,
Art. ID 435686, 15 pp.

\bibitem{PrSz} S. Prus, M. Szczepanik, Nearly uniformly noncreasy Banach
spaces, J. Math. Anal. Appl. 307 (2005), 255--273.

\bibitem{SiSm} B. Sims, M. A. Smyth, On some Banach space properties
sufficient for weak normal structure and their permanence properties, Trans.
Amer. Math. Soc. 351 (1999), 497--513.

\bibitem{Su1} T. Suzuki, Fixed point theorems and convergence theorems for
some generalized nonexpansive mappings, J. Math. Anal. Appl. 340 (2008),
1088--1095.
\end{thebibliography}
\end{document}